\documentclass[11pt,a4paper,reqno]{amsart}

\usepackage{geometry}
\geometry{
  paper = letterpaper,
  margin = 1.25in,
  includehead,
  footskip = 1cm
}\usepackage{amsmath}
\usepackage{amsfonts}
\usepackage{amssymb}
\usepackage{amsthm}
\usepackage{amscd}
\usepackage{stmaryrd,mathrsfs}
\usepackage{enumerate}
\usepackage{upgreek}
\usepackage{fancyhdr}
\usepackage{hyperref}

\newcommand{\Section}[1]{\section{#1} \setcounter{equation}{0}}

\newtheorem{theorem}{Theorem}[section]
\newtheorem{corollary}[theorem]{Corollary}
\newtheorem{lemma}[theorem]{Lemma}
\DeclareMathOperator{\card}{card}

\makeatletter
\def\imod#1{\allowbreak\mkern6mu({\operator@font mod}\,\,#1)}
\makeatother

\begin{document}
\large
\title{Squarefree density of polynomials}
\author{J. M. Kowalski}
\author{R. C. Vaughan}


\address{\noindent JMK: Dept. of Mathematics, Penn. State University, University Park, PA 16802, USA.}
\email{ jmk672@psu.edu}
\address{\noindent RCV: Dept. of Mathematics, Penn. State University, University Park, PA 16802, USA.}
\email{rcv4@psu.edu}


\begin{abstract}
\noindent This paper is concerned with squarefree values of polynomials
\[
\mathcal P(\mathbf x) \in\mathbb Z[x_1,\ldots,x_s].
\]
where we suppose that for each $j\le s$ we have $|x_j|\le P_j$.  Then we define
\[
N_{\mathcal P} (\mathbf P) = \sum_{\substack{\mathbf x\\
|x_j|\le P_j\\
\mathcal P(\mathbf x)\not=0}} \mu\big(|\mathcal P(\mathbf x)|\big)^2
\]
and we are interested in its behaviour when  $\min_jP_j\rightarrow\infty$, and the extent to which this can be approximated by
\[
N_{\mathcal P} (\mathbf P) \sim 2^sP_1\ldots P_s\mathfrak S_{\mathcal P}
\]
where
\[
\mathfrak S_{\mathcal P} = \prod_p \left(
1-\frac{\rho_{\mathcal P}(p^2)}{p^{2s}}
\right)
\]
and
\[
\rho_{\mathcal P}(d) = \card\{\mathbf x\in \mathbb Z_d^s: \mathcal P(\mathbf x)\equiv 0\imod{d}\}.
\]
We establish this is a number of new cases, and in particular show that if $s\ge 2$ and $\mathfrak S_{\mathcal P}=0$, then
\[
N_{\mathcal P} (\mathbf P) =o(P_1\ldots P_s).
\]
as $\min_jP_j\rightarrow\infty$.
\end{abstract}

\maketitle

\Section{Introduction and Statement of Results}
\label{sec:one}
\noindent The work described in this paper has its origins in a talk given by Manjul Bhargava at the 60th birthday conference for Krishna Alladi at the University of Florida in March 2016, the main contents of which are in Bhargava \cite{MB14} and Bhargava, Shankar and Wang \cite{BSW}.  This was concerned with squarefree values of integral forms of degree $d$ in $s\ge 2$ variables $\mathbf x=(x_1,\ldots,x_s)$,
\begin{equation}
\label{eq:one1}
\mathcal F(\mathbf x) = \sum_{\substack{i_1,\ldots,i_d\\
i_1\le \ldots\le i_d\le s}} c_{i_1\ldots  i_d} x_{i_1}\ldots x_{i_d}
\end{equation}
in various special cases.
\par
The main result of this paper, Theorem \ref{thm:one1} below, was intended as part of an attack on the cubic case.  However very recently this was resolved by Lapkova and Xiao \cite{LX21} by a different method.  Nevertheless Theorem \ref{thm:one1} has many other uses and we give some examples here.
\begin{theorem}
\label{thm:one1}
Suppose that $s\ge 2$ and $\mathcal C(\mathbf x)$ is an integral cubic form not of the shape $a(b_1x_1+\cdots+b_sx_s)^3$ where $a,b_1,\ldots,b_s\in\mathbb Q$, and let $M(P)$ denote the number of solutions of $\mathcal C(\mathbf x) =\mathcal C(\mathbf X)$ with $\mathbf x$, $\mathbf X\in[-P,P]^s$.  Then
\[
M(P)\ll P^{2s-2+\varepsilon}.
\]
\end{theorem}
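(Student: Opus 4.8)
The plan is to recast the problem in terms of the single cubic form $\mathcal G(\mathbf x,\mathbf X)=\mathcal C(\mathbf x)-\mathcal C(\mathbf X)$ in the $2s$ variables $(\mathbf x,\mathbf X)$: I will show $\mathcal G$ is absolutely irreducible, count rational points of bounded height on the projective hypersurface it cuts out in $\mathbb P^{2s-1}$ by a dimension‑growth estimate, and then deduce the box count for $M(P)$ by a dyadic decomposition of the affine cone over that hypersurface. The point is that $M(P)$ exactly counts the integral points of $[-P,P]^{2s}$ lying on this cone, and a cone over a geometrically integral hypersurface of positive dimension carries few integral points in a box.

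First I would make two harmless reductions. If $\mathcal C$ essentially involves only $s'<s$ of the variables after a rational linear change, then $M(P)=(2P+1)^{2(s-s')}M'(P)$ with $M'$ the analogous count for the form in $s'$ variables, which is still not of the shape $a\ell^3$; so it suffices to treat forms genuinely depending on all $s$ variables, and then $s\ge 2$ is automatic (a cubic form in one variable is $cx_1^3$, excluded). Next, the absolute irreducibility of $\mathcal G$: since $\mathcal G$ has degree $3$, reducibility over $\overline{\mathbb Q}$ means a linear factor, whose Galois orbit has size $1$, $2$ or $3$. Orbit size $1$ or $2$ produces a \emph{rational} linear factor of $\mathcal G$; orbit size $3$ makes $\mathcal G$ a scalar times a norm form of a cubic algebra and hence dependent on at most $3$ linear forms in $(\mathbf x,\mathbf X)$, which is impossible since, as $\mathcal C$ uses all $s$ variables, $\mathcal G$ has no nonzero direction of constancy and so genuinely needs all $2s\ge 4$ variables. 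So the whole issue is to rule out a rational linear factor $\ell=\ell_1(\mathbf x)+\ell_2(\mathbf X)$. Comparing the pure‑$\mathbf x$ and pure‑$\mathbf X$ parts of $\mathcal G=\ell\,q$ (both of which carry no mixed monomials) forces $\ell_1\mid\mathcal C$ and $\ell_2\mid\mathcal C$; if $\ell_1\equiv-\ell_2$ then $\mathcal C$ depends only on $\ell_1$, giving $\mathcal C=a\ell_1^3$; otherwise $\mathcal C=L\cdot Q$ with $L$ rational linear and $\ell=aL(\mathbf x)+bL(\mathbf X)$, $ab\ne 0$, and restricting $\mathcal G$ to $L(\mathbf x)=-\tfrac ba L(\mathbf X)$ and demanding it vanish there forces first $L\mid Q$, so $\mathcal C=L^2L''$, and then, on matching the surviving mixed terms, $L''\sim L$, i.e. $\mathcal C\sim L^3$. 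In every case this contradicts the hypothesis, so $\mathcal G$ is absolutely irreducible.

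With that in hand, $X=\{\mathcal G=0\}\subseteq\mathbb P^{2s-1}$ is a geometrically integral hypersurface of degree $3$ and dimension $2s-2$, and the dimension‑growth bound for hypersurfaces (Heath‑Brown's determinant method, in the sharpened form due to Salberger and to Browning–Heath‑Brown) gives $\#\{x\in X(\mathbb Q):H(x)\le B\}\ll_{s,\varepsilon}B^{2s-2+\varepsilon}$. To pass to $M(P)$, I write each nonzero integral point of $[-P,P]^{2s}$ on the cone over $X$ as $m$ times a primitive integral point and group dyadically by the sup‑norm $2^k$ of that primitive point, which lies on $X$ with $H\asymp 2^k\le P$ and is multiplied up $\ll P/2^k$ times; hence $M(P)\ll\sum_{2^k\le P}(P/2^k)(2^{k+1})^{2s-2+\varepsilon}\ll P\cdot P^{2s-3+\varepsilon}=P^{2s-2+\varepsilon}$, using $2s-3\ge 1$ for $s\ge 2$.

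The hard part is the irreducibility step: although each case is elementary, the hypothesis $\mathcal C\ne a\ell^3$ enters in a genuinely delicate way, particularly through the type‑$(2,1)$ forms $\mathcal C=L^2L''$, and getting the bookkeeping of mixed monomials right is where care is needed. A more hands‑on alternative to the last two steps is to fix $\mathbf h=\mathbf x-\mathbf X$ and count $\mathbf X\in[-P,P]^s$ on the quadric $\mathcal C(\mathbf X+\mathbf h)=\mathcal C(\mathbf X)$, using that a fixed nonsingular affine quadric has $\ll P^{s-2+\varepsilon}$ integral points in a box; this succeeds when the quadratic part in $\mathbf X$ of $\mathcal C(\mathbf X+\mathbf h)-\mathcal C(\mathbf X)$ is nonsingular for generic $\mathbf h$, but it degenerates precisely when the relevant determinant vanishes identically in $\mathbf h$ (already for $\mathcal C=x_1^2x_2$), which would then demand a separate divisor‑bound argument — so routing everything through the absolute irreducibility of $\mathcal G$ and one uniform point count is the cleaner path.
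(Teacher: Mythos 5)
Your route is genuinely different from the paper's: the paper argues by induction on $s$, normalises so that two variables carry nonzero cube coefficients, fixes the differences $g=X-x$, $h=Y-y$ and the remaining variables $\mathbf z$, and reduces everything to counting solutions of a binary quadratic equation via an elementary case lemma (resting on Estermann's divisor bounds and, at the base case $s=2$, on Bombieri--Schmidt for Thue equations). Your plan instead treats $\mathcal G(\mathbf x,\mathbf X)=\mathcal C(\mathbf x)-\mathcal C(\mathbf X)$ as a single cubic hypersurface in $\mathbb P^{2s-1}$, proves geometric integrality, and imports a point-counting theorem. The passage from the projective count to $M(P)$ via primitive points and dyadic blocks is correct (using $2s-3\ge 1$), and the irreducibility analysis is essentially sound, though your case split silently assumes that the two rational linear forms $\ell_1\mid\mathcal C$ and $\ell_2\mid\mathcal C$ extracted from the pure parts are proportional; the sub-case $\ell_1\not\sim\ell_2$ does occur a priori and needs its own (short) argument — comparing the bidegree-$(2,1)$ and $(1,2)$ parts of $\ell q$ forces $\mathcal C=\ell_1^2\ell'=-\ell_2^2\ell''$, which is incompatible with unique factorisation unless $\mathcal C$ is proportional to a cube.

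The serious issue is the point-counting input. You need $N(X;B)\ll B^{\dim X+\varepsilon}$ for a \emph{fixed} geometrically integral cubic hypersurface $X\subset\mathbb P^{2s-1}$ which may be highly singular and may be a cone (e.g.\ $\mathcal G$ always contains the diagonal and is singular wherever both gradients of $\mathcal C$ vanish). Degree $3$ is precisely the threshold case of the determinant method: the hypersurface bound of Browning--Heath-Brown has the shape $B^{\,n-2+2/\sqrt d+\varepsilon}$, and for $d=3$ this is $B^{\dim X+2/\sqrt3-1+\varepsilon}\approx B^{\dim X+0.155}$, i.e.\ \emph{worse} than dimension growth; the clean $B^{\dim X+\varepsilon}$ statements of Browning--Heath-Brown--Salberger and Salberger are for $d\ge 4$ (or $d\ge 6$), and the cubic case requires separate geometric arguments that are fully in the literature for surfaces in $\mathbb P^3$ (Heath-Brown's Annals 2002 theorem, which covers your $s=2$ case) but not, to our knowledge, in the generality you need for cones and singular cubic hypersurfaces of dimension $\ge 3$. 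As written, then, your proof for $s\ge 3$ rests on a citation that either does not exist at the required strength or at best needs to be located and checked very carefully; if only the $2/\sqrt3$ exponent is available, your method yields $M(P)\ll P^{2s-2+0.16}$, which falls short of the theorem. This is exactly the gap the paper's elementary induction is designed to avoid.
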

Apart possibly from the $\varepsilon$, this is best possible, as can be seen with the example
\[
cx_1^3+c'(x_2+\cdots+x_s)^3.
\]
\begin{corollary}
\label{thm:one2}
Suppose that $s\ge 3$, $k=3$ or $4$, $c\in\mathbb Z\setminus\{0\}$ and the integral cubic form $\mathcal C^*(x_2,\ldots,x_s)$ is not of the form $a(b_2x_2+\cdots+b_sx_s)^3$ where $a,b_2,\ldots,b_s\in\mathbb Q$, let $Q=P^{3/k}$, and let $L(P)$ denote the number of solutions of
\[
cx_1^k+\mathcal C^*(x_2,\ldots,x_s) =cX_1^k + \mathcal C^*(X_2,\ldots,X_s)
\]
with $|x_1|\le Q$, $|X_1|\le Q$, $|x_j|\le P$, $|X_j|\le P$ $(2\le j\le s)$.  Then
\[
L(P)\ll P^{2s-3+\varepsilon}Q^{1/2}.
\]
\end{corollary}
\par
There is, of course, no reason to restrict the original question to forms.  Instead one can consider general integral polynomials
\[
\mathcal P(\mathbf x) \in\mathbb Z[x_1,\ldots,x_s].
\]
where we suppose that for each $j\le s$ we have $|x_j|\le P_j$.  We extend the definition of the M\"obius function by taking $\mu(0)=0$.  Then we define
\begin{equation}
\label{eq:one2}
N_{\mathcal P} (\mathbf P) = \sum_{\substack{\mathbf x\\
|x_j|\le P_j}} \mu\big(|\mathcal P(\mathbf x)|\big)^2
\end{equation}
and we are interested in its behaviour when  $\min_jP_j\rightarrow\infty$, and the extent to which this can be approximated by
\begin{equation}
\label{eq:one3}
N_{\mathcal P} (\mathbf P) \sim 2^sP_1\ldots P_s\mathfrak S_{\mathcal P}
\end{equation}
where
\begin{equation}
\label{eq:one4}
\mathfrak S_{\mathcal P} = \prod_p \left(
1-\frac{\rho_{\mathcal P}(p^2)}{p^{2s}}
\right)
\end{equation}
and
\begin{equation}
\label{eq:one5}
\rho_{\mathcal P}(d) = \card\{\mathbf x\in \mathbb Z_d^s: \mathcal P(\mathbf x)\equiv 0\imod{d}\}.
\end{equation}
Note that
\[
\prod_{p\le n} \left(
1-\frac{\rho_{\mathcal P}(p^2)}{p^{2s}}
\right)
\]
is a non-negative decreasing sequence so it converges as $n\rightarrow\infty$ to a non-negative limit, although when the limit is $0$ it is usual to describe such a product as diverging!
\par
It seems that (\ref{eq:one3}) should hold in all cases.  Thus if $\mathcal P$ is such that it has a shortage of squarefree values, then we expect that
\begin{equation}
\label{eq:one6}
\mathfrak S_{\mathcal P}=0.
\end{equation}
In this case (\ref{eq:one3}) is easy to prove, although we have not seen it in the extant literature.
\begin{theorem}
\label{thm:one3}
Suppose that $s\ge 2$ and $\mathcal P\in\mathbb Z[x_1,\ldots,x_s]$ is an integral polynomial.  If $\mathfrak S_{\mathcal P}=0$, then
\[
N_{\mathcal P} (\mathbf P) =o(P_1\ldots P_s).
\]
as $\min_jP_j\rightarrow\infty$.
\end{theorem}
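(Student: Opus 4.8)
The plan is to run a one-sided sieve and then reduce to a lattice-point count. First I would observe that for any parameter $y\ge 2$ and any integer $n$, the quantity $\mu(|n|)^2$ is $0$ or $1$, and equals $1$ only if $n$ is squarefree, hence in particular only if $p^2\nmid n$ for every prime $p\le y$ (this reads correctly at $n=0$, where both sides are $0$). Summing this inequality over the box gives
\[
N_{\mathcal P}(\mathbf P)\ \le\ A_y(\mathbf P)\ :=\ \card\Bigl\{\mathbf x : |x_j|\le P_j\ (1\le j\le s),\ p^2\nmid\mathcal P(\mathbf x)\ \text{for every }p\le y\Bigr\}.
\]

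Next I would evaluate $A_y(\mathbf P)$ for \emph{fixed} $y$ as $\min_j P_j\to\infty$. Put $Q=\prod_{p\le y}p$. Because $\mathcal P$ has integer coefficients, membership in the set counted by $A_y(\mathbf P)$ depends only on $\mathbf x\bmod Q^2$, and by the Chinese Remainder Theorem the number of admissible residue classes modulo $Q^2$ equals
\[
\prod_{p\le y}\bigl(p^{2s}-\rho_{\mathcal P}(p^2)\bigr)=Q^{2s}\prod_{p\le y}\Bigl(1-\frac{\rho_{\mathcal P}(p^2)}{p^{2s}}\Bigr).
\]
Since the number of integers in $[-P_j,P_j]$ lying in a prescribed class modulo $Q^2$ is $2P_j/Q^2+O(1)$, multiplying out over $j$ and summing over the at most $Q^{2s}$ admissible classes yields
\[
A_y(\mathbf P)=2^sP_1\cdots P_s\prod_{p\le y}\Bigl(1-\frac{\rho_{\mathcal P}(p^2)}{p^{2s}}\Bigr)+O_y\!\Bigl(\frac{P_1\cdots P_s}{\min_j P_j}\Bigr),
\]
where the implied constant may depend on $y$ (equivalently on $Q$) but not on $\mathbf P$.

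Finally I would optimise over $y$. By hypothesis $\mathfrak S_{\mathcal P}=0$; as the paper already observes, the partial products $\prod_{p\le y}(1-\rho_{\mathcal P}(p^2)/p^{2s})$ form a non-increasing sequence tending to this limit, so given $\varepsilon>0$ we may fix $y=y(\varepsilon)$ with $2^s\prod_{p\le y}(1-\rho_{\mathcal P}(p^2)/p^{2s})<\varepsilon$. (This simultaneously covers, with no separate argument, the degenerate case in which one factor already vanishes, i.e.\ $\mathcal P$ assumes no squarefree value, and also the trivial case $\mathcal P\equiv 0$.) For this $y$ the displayed estimate gives $\limsup_{\min_j P_j\to\infty}N_{\mathcal P}(\mathbf P)/(P_1\cdots P_s)\le\varepsilon$, and letting $\varepsilon\to 0$ completes the proof.

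I do not expect a genuine obstacle here — this matches the remark preceding the statement that \eqref{eq:one3} is ``easy to prove'' when $\mathfrak S_{\mathcal P}=0$. The only mildly delicate point is bookkeeping in the lattice-point count: one must check that the $O(1)$ boundary error per residue class, taken over the $Q^{2s}$ classes, remains $o(P_1\cdots P_s)$ once $y$ (hence $Q$) is held fixed, which is exactly what the error term $O_y(P_1\cdots P_s/\min_jP_j)$ records. (Incidentally the hypothesis $s\ge 2$ plays no role in this particular statement and is present only for consistency with the rest of the paper.)
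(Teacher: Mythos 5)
Your proof is correct and follows essentially the same route as the paper: truncate the squarefree condition to primes $p\le y$ for a fixed $y$, count the resulting congruence conditions in the box with an $O_y\bigl(P_1\cdots P_s/\min_jP_j\bigr)$ error, and let the partial product --- which tends to $\mathfrak S_{\mathcal P}=0$ --- do the rest. The only (immaterial) difference is that you count the admissible residue classes modulo $Q^2$ directly via the Chinese Remainder Theorem, whereas the paper arrives at the same main term by M\"obius inversion over the squarefree divisors $m\mid D$.
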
\par
It is sometimes asserted that for large $s$ such results should follow by an application of the Hardy-Littlewood method.  However, as far as we are aware, there is nothing in the extant literature for general $\mathcal P$.  Moreover it is not entirely clear how methods currently available for showing the existence of non-trivial representations of $0$ by even cubic forms can be adapted to this situation.  First of all such methods take a small region about a non-trivial real zero of the form and expand it homothetically, but in some sense we need to approximate the number of solutions to
\[
\mathcal P(\mathbf x) =n
\]
for most $n$ and there is no clear way of adapting the method to cover the $s$-dimensional boxes considered here.  Second of all, those methods either require the form to satisfy a condition such as being non-singular in order to show that the concomitant exponential sum
\[
S(\alpha) = \sum_{|\mathbf x|\le P} e\big(\alpha\mathcal P(\mathbf x)\big)
\]
is relatively small on the minor arcs, or have to use an alternative argument when $S(\alpha)$ is not always small on the minor arcs which only ensures the existence of solutions rather than gives a good approximation for their number.
\par
We apply Corollary \ref{thm:one2} above to establish the following.
\begin{theorem}
\label{thm:one4}
Suppose that $s\ge 3$, $k=3$ or $4$, $c\in\mathbb Z\setminus\{0\}$ and the integral cubic form $\mathcal C^*(x_2,\ldots,x_s)$ is not of the shape $a(b_2x_2+\cdots+b_sx_s)^3$ where $a,b_1,\ldots,b_s\in\mathbb Q$.  Let
\begin{equation}
\label{eq:one7}
\mathcal P(\mathbf x) = cx_1^k  + \mathcal C^*(x_2,\ldots,x_s).
\end{equation}
with $c\not=0$.  Suppose $P$ is large and let $Q=P_1=P^{3/k}$, $P_j=P$ $(2\le j\le s)$, and
\[
N_{\mathcal P} (\mathbf P) = \sum_{|x_1|\le Q} \sum_{\substack{x_2,\ldots,x_s\\
|x_j|\le P}} \mu\big(|\mathcal P(\mathbf x)|\big)^2.
\]
Then, as $P\rightarrow\infty$,
\[
N_{\mathcal P} (\mathbf P) = 2^sQP^{s-1}\mathfrak S_{\mathcal P} + E_k
\]
where
\[
E_3\ll P^{s-\frac1{8s-2}+\varepsilon}
\]
and
\[
E_4\ll \frac{QP^{s-1}}{(\log P)\log\log P}.
\]
\end{theorem}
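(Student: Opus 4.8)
The argument rests on the squarefree sieve identity
\[
N_{\mathcal P}(\mathbf P)=\sum_{\substack{\mathbf x\\ |x_1|\le Q,\ |x_j|\le P}}\ \sum_{d^2\mid\mathcal P(\mathbf x)}\mu(d)=\sum_{d}\mu(d)\,A_d,\qquad A_d=\#\{\mathbf x\in\mathcal B:\ d^2\mid\mathcal P(\mathbf x)\neq0\},
\]
where $\mathcal B=\{\mathbf x:|x_1|\le Q,\ |x_j|\le P\ (2\le j\le s)\}$: the tuples with $\mathcal P(\mathbf x)=0$ contribute nothing and number only $O(P^{s-1})$, since for each $(x_2,\dots,x_s)$ there are at most $k$ values of $x_1$, and this is below either claimed error. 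As $|\mathcal P(\mathbf x)|\ll P^3$ on $\mathcal B$, only $d\ll P^{3/2}$ occur. The plan is to treat $A_d$ for small $d$ by its local density, and for large $d$ by Corollary \ref{thm:one2}.

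For $d$ up to a level $D$, writing $\mathcal B$ as a union of residue classes modulo $d^2$ gives $A_d=\rho_{\mathcal P}(d^2)d^{-2s}(2Q+1)(2P+1)^{s-1}+O(\mathcal E_d)$, the boundary term satisfying $\mathcal E_d\ll d^{\varepsilon}P^{s-1}$ while $d^2$ lies below every side length and $\mathcal E_d\ll A_d\ll\rho_{\mathcal P}(d^2)\ll d^{2s-2+\varepsilon}$ once $d^2$ exceeds all side lengths. Summing the main terms and completing the Euler product (using $\rho_{\mathcal P}(p^2)p^{-2s}\ll p^{-2}$) produces $2^sQP^{s-1}\mathfrak S_{\mathcal P}$, with the omitted tail of the product $\ll QP^{s-1}D^{-1+\varepsilon}$, a contribution $O(P^{s-1})$ from $(2Q+1)(2P+1)^{s-1}=2^sQP^{s-1}+O(P^{s-1})$, and accumulated boundary terms $\ll P^{s-1/2+\varepsilon}+D^{2s-1+\varepsilon}$. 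For $d>D$ I would put $v=\mathcal P(\mathbf x)$, $r(v)=\#\{\mathbf x\in\mathcal B:\mathcal P(\mathbf x)=v\}$, so that $\bigl|\sum_{d>D}\mu(d)A_d\bigr|\le\sum_{d>D}A_d=\sum_{v\neq0}r(v)\,\#\{d>D:d^2\mid v\}$; Cauchy--Schwarz together with $\sum_v r(v)^2=L(P)$ (Corollary \ref{thm:one2}) and the elementary estimate $\sum_{|v|\ll P^3}(\#\{d>D:d^2\mid v\})^2\ll P^3/D$ (which follows from $\sum_{d_1,d_2>D}[d_1,d_2]^{-2}\ll D^{-1}$) then gives $\sum_{d>D}A_d\ll L(P)^{1/2}(P^3/D)^{1/2}\ll P^{s+\varepsilon}Q^{1/4}D^{-1/2}$.

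For $k=3$ we have $Q=P$, so the large-$d$ bound is $\ll P^{s+1/4+\varepsilon}D^{-1/2}$, which balances the small-$d$ boundary contribution $D^{2s-1+\varepsilon}$ at $D=P^{(4s+1)/(8s-2)}$; a short computation shows both then equal $P^{s-1/(8s-2)+\varepsilon}$, while all remaining errors ($P^{s-1/2+\varepsilon}$, $QP^{s-1}D^{-1+\varepsilon}$, $P^{s-1}$) are smaller, giving $E_3\ll P^{s-1/(8s-2)+\varepsilon}$. For $k=4$ the factor $Q^{1/4}=P^{3/16}$ and the $P^{\varepsilon}$ in Corollary \ref{thm:one2} rule out any power saving, so instead I would fix $z=\tfrac14\log P$ and split off the $z$-squarefree count $N_z=\sum_{d\mid\prod_{p\le z}p}\mu(d)A_d$, the number of $\mathbf x\in\mathcal B$ with $p^2\nmid\mathcal P(\mathbf x)$ for all $p\le z$. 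Every divisor occurring here is $\le e^{(1+o(1))z}=P^{1/4+o(1)}$, so $d^2<Q$, every boundary term is $\ll d^{\varepsilon}P^{s-1}$, and (the finitely many bad primes of $\mathcal P$ being $\le z$, hence accounted for) the small-$d$ analysis yields $N_z=2^sQP^{s-1}\prod_{p\le z}\bigl(1-\rho_{\mathcal P}(p^2)p^{-2s}\bigr)+O(P^{s-1+\varepsilon})=2^sQP^{s-1}\mathfrak S_{\mathcal P}+O\bigl(QP^{s-1}/((\log P)\log\log P)\bigr)$, the last error from $\prod_{p>z}(1-\rho_{\mathcal P}(p^2)p^{-2s})^{-1}-1\asymp\sum_{p>z}p^{-2}\asymp(z\log z)^{-1}$. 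It then remains to bound $W=N_z-N=\#\{\mathbf x\in\mathcal B:\ p^2\mid\mathcal P(\mathbf x)\neq0\text{ for some }p>z\}\le\sum_{p>z}A_p$, which I would dissect: for $z<p\le\sqrt P$ by the local density, with $\sum_p\ll QP^{s-1}\sum_{p>z}p^{-2}+P^{s-1/2+\varepsilon}$; for $\sqrt{2P}<p\le P(\log P)^{-100}$ by the fibred bound $A_p\ll QP^{s-2}$ coming from $cx_1^4\equiv-\mathcal C^*(x_2,\dots,x_s)\pmod{p^2}$ followed by a degree $\le3$ congruence in $x_2$, giving $\sum_p\ll QP^{s-2}\cdot P(\log P)^{-101}$; and for $p>P(\log P)^{-100}$ by the large-$d$ Cauchy--Schwarz bound, now $\ll P^{s+3/16+\varepsilon}\bigl(P(\log P)^{-100}\bigr)^{-1/2}\ll P^{s-5/16+\varepsilon}$, a genuine power saving over $QP^{s-1}$. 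Collecting these gives $W\ll QP^{s-1}/((\log P)\log\log P)$, whence $E_4\ll QP^{s-1}/((\log P)\log\log P)$.

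The delicate part is entirely in the case $k=4$. Because Corollary \ref{thm:one2} loses a factor $Q^{1/2}$ relative to the diagonal, the large-$d$ Cauchy--Schwarz bound beats $QP^{s-1}$ by a power of $P$ only when the cutoff is pushed up to $D\asymp P$, which in turn is feasible only because the fibred estimate $A_p\ll QP^{s-2}$ — exploiting the special shape $cx_1^4+\mathcal C^*$ — disposes of the intermediate range $\sqrt P\ll p\ll P$ efficiently. Arranging these three ranges so that every piece except the Euler-product tail is $o(QP^{s-1})$ with room to spare, and establishing the fibred bound uniformly (handling the finitely many exceptional primes, which are absorbed into $N_z$), is where the real work lies.
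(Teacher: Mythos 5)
Your overall architecture is the same as the paper's: the sieve identity, local densities for small moduli, Cauchy--Schwarz against Corollary \ref{thm:one2} for large moduli, the balance $T=P^{(4s+1)/(8s-2)}$ for $k=3$, and, for $k=4$, a logarithmic sieve level for the main term together with a three-range dissection of the primes $p>z$, using a fibred bound $A_p\ll QP^{s-2}$ in the middle range. The $k=3$ half and the large-$p$ and small-$p$ ranges of the $k=4$ half are sound and match the paper.

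The genuine gap is the middle-range fibred bound. You derive $A_p\ll QP^{s-2}$ by fixing $x_1,x_3,\ldots,x_s$ and treating $\mathcal C^*(x_2,\ldots,x_s)\equiv -cx_1^4\pmod{p^2}$ as ``a degree $\le 3$ congruence in $x_2$.'' This does not give $O(1)$ solutions $x_2\in[-P,P]$ in general: the coefficients of that polynomial in $x_2$ are themselves polynomials in $x_3,\ldots,x_s$ (the $x_2^3$-coefficient may even vanish identically), so for a positive proportion of fibres the polynomial can degenerate modulo $p$, or have repeated roots, and then a root modulo $p$ can lift to $\asymp p$ residue classes modulo $p^2$ (or the congruence can become independent of $x_2$ altogether, as for $\mathcal C^*=x_3^2x_2+\cdots$ with $p\mid x_3$). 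These exceptional fibres are not ``finitely many bad primes'' absorbable into $N_z$; they occur for every $p$ in the range and, bounded crudely, their contribution is not $o(QP^{s-1})$. The paper's mechanism is different and is what makes the range work: it fibres over $x_1$ and over the residues of \emph{all} of $x_2,\ldots,x_s$ modulo $p$, bounds the number of points of $\mathcal C^*\equiv -cx_1^4\pmod p$ by $\ll p^{s-2}$ via Schmidt, and Hensel-lifts using the fact that every such point is nonsingular by Euler's relation \emph{provided} $p\nmid cx_1^4$. Guaranteeing $p\nmid cx_1^4$ forces the split of the middle range at $|c|Q$ and a separate count of the $x_1\equiv 0\pmod p$ contribution; your write-up does not address the case $p\mid x_1$ at all. (There is also a harmless slip: your ranges leave $(\sqrt P,\sqrt{2P}]$ uncovered.) Until the bound $A_p\ll QP^{s-2}$, or a substitute summing to $o(QP^{s-1})$ over $\sqrt P<p\le P(\log P)^{-100}$, is actually established, the $k=4$ estimate is not proved.
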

We remark that the case
\[
\mathcal C^*(x_2,\ldots,x_s) = a(b_2x_2+\cdots+b_sx_s)^3
\]
can be dealt with by the methods of Filaseta \cite{MF94} and Sanjaya and Wang \cite{SW23}.
\par
There is a useful survey on the squarefree density for this and various concomitant questions by Tsvetkov \cite{RT19}.
\par
Generally formul\ae\, involving $\varepsilon$ will hold for every $\varepsilon>0$ and the stated ranges of the other variables, albeit any implicit constant may well depend on $\varepsilon$.  Unless stated otherwise implicit constants will not depend on the other variables.  The authors would like to thank the referee for carefully reading the manuscript.

\Section{Lemmata}
\label{sec:two}

\noindent We begin with a pair of results from the appendix of Estermann \cite{TE}.

\begin{lemma}
\label{lem:two1}
Suppose that $a$, $b$, $n$ are positive integers and let $Q(n;a,b)$ denote the number of solutions of $ax^2+by^2=n$ in positive integers $x$ and $y$.  Then
$$Q(n;a,b)\le 2d(n).$$
\end{lemma}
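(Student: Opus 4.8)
The plan is to derive this from the classical bound of $O(d(n))$ for the number of representations of $n$ by a fixed positive definite binary quadratic form; the absence of a cross term in $ax^2+by^2$ keeps everything clean. I would argue in three stages.

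First, reduce to primitive solutions. If $ax^2+by^2=n$ with $x,y\ge 1$ and $g=\gcd(x,y)$, then $g^2\mid n$ and $(x/g,y/g)$ solves $aX^2+bY^2=n/g^2$ with $\gcd(X,Y)=1$, and every solution arises this way for a unique $g$. Hence $Q(n;a,b)\le\sum_{g^2\mid n}R(n/g^2)$, where $R(m)$ is the number of positive solutions of $aX^2+bY^2=m$ with $\gcd(X,Y)=1$. Since $\sum_{g^2\mid n}2^{\omega(n/g^2)}=d(n)$ --- both sides are multiplicative and agree at prime powers --- it is enough to prove $R(m)\le 2\cdot 2^{\omega(m)}$.

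Second, bound $R(m)$. For a primitive solution one has $aX^2\equiv-bY^2\imod m$, and $\gcd(X,Y)=1$ forces $\gcd(Y,m)\mid a$ and $\gcd(X,m)\mid b$; after removing these (divisors of $ab$) from the modulus we obtain a modulus $m'$, with $m/m'$ dividing a power of $ab$, on which $\xi:=aXY^{-1}$ satisfies $\xi^2\equiv-ab\imod{m'}$. The number of residues $\xi$ that arise is at most the number of square roots of $-ab$ modulo $m'$, which is $O(2^{\omega(m)})$; and for a fixed $\xi$ the associated solutions lie in the coset of a lattice of covolume $\asymp m$ (defined by $aX\equiv\xi Y\imod{m'}$) within the region $aX^2+bY^2\le m$, of area $\pi m/\sqrt{ab}<4m$, so Minkowski's theorem plus a short-vector count bounds their number by an absolute constant. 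Assembling these bounds --- and accounting for the prime $2$, the primes dividing $ab$, and the at most two units --- gives $R(m)\le 2\cdot 2^{\omega(m)}$, whence $Q(n;a,b)\le 2d(n)$.

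An alternative, presumably nearer to Estermann's own argument, avoids geometry of numbers via the Brahmagupta--Fibonacci identity
\[
(aX_1X_2+bY_1Y_2)^2+ab(X_1Y_2-X_2Y_1)^2=(aX_1^2+bY_1^2)(aX_2^2+bY_2^2):
\]
fixing one solution $(X_0,Y_0)$, the map $(X,Y)\mapsto X_0Y-XY_0$ is injective on the solution set (a sign comparison rules out coincidences), and the companion identity pins its value down in terms of a divisor of $m$. Either way, the main obstacle is not the ``divisor of $n$'' skeleton, which is routine, but getting the constant down to an absolute $2$ uniformly in $a$ and $b$: one has to check that stripping the hidden factors of $ab$ out of $\gcd(X,m)$, $\gcd(Y,m)$ and the unit and boundary contributions costs no more than the single extra factor of $2$ that the statement permits.
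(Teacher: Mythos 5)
The paper does not prove this lemma at all: it is quoted verbatim from the appendix of Estermann's 1931 paper, so there is no in-house argument to compare against. Judged on its own terms, your proposal has a correct and even elegant outer layer --- the reduction to primitive solutions and the identity $\sum_{g^2\mid n}2^{\omega(n/g^2)}=d(n)$ are both right, as is the observation that $\gcd(Y,m)\mid a$ and $\gcd(X,m)\mid b$ for a primitive solution --- but the core of the lemma is the absolute constant $2$, uniform in $a$ and $b$, and that is precisely the step you do not carry out. Two specific points fail as stated. First, the number of square roots of $-ab$ modulo $m'$ is not $O(2^{\omega(m)})$ with a usable constant once $\gcd(ab,m)>1$: a congruence $\xi^2\equiv -ab\pmod{p^k}$ with $p\mid ab$ can have on the order of $p^{v_p(ab)/2}$ solutions, and passing to the prime-to-$XY$ part $m'$ of $m$ only strips a divisor of $m$ supported on $p\mid ab$ whose multiplicity is not controlled by $\omega(m)$; the solutions lost at those primes have to be counted separately and you do not do so. Second, the per-residue-class count via Minkowski's theorem yields some unspecified absolute constant (and the solutions lie \emph{on} the ellipse, not inside it, so the area/covolume heuristic needs a separate short-vector argument anyway). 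Multiplying ``$O(2^{\omega(m)})$ classes'' by ``an absolute constant per class'' cannot yield $R(m)\le 2\cdot 2^{\omega(m)}$; the sentence ``assembling these bounds gives $R(m)\le 2\cdot 2^{\omega(m)}$'' is an assertion, and your own closing paragraph concedes that the required uniformity in $a,b$ has not been checked. Since the bound $Q(n;a,b)\ll_\varepsilon n^{\varepsilon}$, or $O(d(n))$ with an unspecified constant, would be routine, the unproved constant is the entire content of the lemma, so this is a genuine gap rather than a presentational one.

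Your ``alternative'' route is indeed essentially Estermann's: from two solutions one has
\[
(aX_1X_2+bY_1Y_2)(X_1Y_2-X_2Y_1)=m(X_2Y_2-X_1Y_1)
\quad\text{and}\quad
(aX_1X_2+bY_1Y_2)^2+ab(X_1Y_2-X_2Y_1)^2=m^2,
\]
and one attaches to each solution a complementary divisor pair $m=dd'$ in such a way that the map is at most two-to-one; the factor $2d(n)$ then drops out with no geometry of numbers and no discussion of primes dividing $ab$. But in your write-up this is only gestured at (``a sign comparison rules out coincidences'', ``the companion identity pins its value down''); the injection into divisors is never constructed, so neither route in the proposal actually reaches the stated inequality. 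For what it is worth, the only place the paper uses this lemma (case (i) of Lemma \ref{lem:two4}) needs merely $Q(n;a,b)\ll n^{\varepsilon}$, which your argument does deliver; but that is weaker than what you set out to prove.
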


\begin{lemma}
\label{lem:two2}
Suppose that $a$, $b$, $m$, $n$ are positive integers and let $R(n;a,b)$ denote the number of solutions of $ax^2-by^2=n$ in positive integers $x$ and $y$ with $ax^2\le m$.  Then
$$R(n;a,b)\le 2d(n)(1+\log m).$$
\end{lemma}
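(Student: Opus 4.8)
The plan is to read the equation as the assertion that $n$ is a value of the indefinite binary quadratic form $g(X,Y)=aX^2-bY^2$ of discriminant $\Delta=4ab$, and to combine the classical divisor bound for the number of representation classes with the Pell/automorph structure. One may first assume $\gcd(a,b)=1$: if $g_0=\gcd(a,b)>1$ then $g_0\mid n$ (else there is nothing to prove) and the equation is equivalent to $(a/g_0)x^2-(b/g_0)y^2=n/g_0$ with $(a/g_0)x^2\le m/g_0$, which only shrinks the right-hand side of the asserted bound. Next, if $\Delta$ is a perfect square then $\gcd(a,b)=1$ forces $a=\alpha^2$ and $b=\beta^2$, so $ax^2-by^2=(\alpha x-\beta y)(\alpha x+\beta y)$; hence every admissible $(x,y)$ corresponds to a factorisation of $n$ into two positive integers of equal parity, of which there are at most $d(n)$, and the bound holds with $m$ playing no role.

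Now suppose $\Delta=4ab$ is not a perfect square. To each positive solution attach $\theta=\sqrt a\,x+\sqrt b\,y$; since $\theta(\sqrt a\,x-\sqrt b\,y)=n$, the requirement $y>0$ is equivalent to $\theta>\sqrt n$, while $ax^2\le m$ gives $\sqrt a\,x\le\sqrt m$ and hence $\theta<2\sqrt a\,x\le 2\sqrt m$. The orientation-preserving integral automorph group of $g$ is infinite cyclic, generated by the Pell automorph $(x,y)\mapsto(tx+ub\,y,\,ty+ua\,x)$ coming from the least solution of $t^2-ab\,u^2=1$, equivalently from the norm-one fundamental unit $\eta_+=t+u\sqrt{ab}$ of $\mathbb Z[\sqrt{ab}]$; since $ab\ge2$ one has $t\ge2$ and $u\sqrt{ab}\ge\sqrt3$, so $\eta_+\ge2+\sqrt3>e$. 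In any single automorph orbit the admissible solutions are those with $\theta=\eta_+^{\,k}\theta_0$ lying in the window $(\sqrt n,2\sqrt m)$, whence at most $\log(2\sqrt{m/n})/\log\eta_+\,+\,1=O(1+\log m)$ of them. The number of automorph orbits, on the other hand, is at most $d(n)$: this is the classical divisor bound for the number of equivalence classes of representations of $n$ by a fixed binary quadratic form, which reduces on removing common square factors to primitive representations, these being parametrised by the residues $\beta\pmod{2n}$ with $\beta^2\equiv\Delta\pmod{4n}$ (subject to an equivalence condition) and numbering $d(n)$ in total. That no spurious factor of $a$ creeps in is transparent from the fact that the lattice $\mathbb Z\sqrt a+\mathbb Z\sqrt b$, rescaled by $1/\sqrt a$, is an invertible fractional ideal of $\mathbb Z[\sqrt{ab}]$ of norm $1/a$, so that $\theta/\sqrt a$, of field-norm $n/a$, determines an integral ideal of norm exactly $n$. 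Multiplying the two bounds and collecting constants gives $R(n;a,b)\le2d(n)(1+\log m)$.

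The crux — and the step I expect to be the main obstacle — is the bound $d(n)$ for the number of representation classes; it is the arithmetic heart of the lemma and rests on the classical theory of binary quadratic forms (equivalently, ideals of the quadratic order $\mathbb Z[\sqrt{ab}]$), with a little extra care required precisely when $ab$ is not squarefree, so that $\mathbb Z[\sqrt{ab}]$ is not maximal and one must work with primitive forms, equivalently invertible ideals, to keep the divisor bound clean. Everything else — the reduction to $\gcd(a,b)=1$, the square-discriminant case, the inequality $\eta_+>e$, and the passage from $O(1+\log m)$ per orbit to the stated constant — is routine.
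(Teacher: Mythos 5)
First, a point of reference: the paper does not prove this lemma at all --- it is imported verbatim from the appendix of Estermann \cite{TE}, whose argument is elementary and avoids the apparatus of class groups and orders entirely. So there is no in-paper proof to match; your proposal has to stand on its own.

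Your architecture (number of automorph orbits $\times$ number of admissible solutions per orbit) is a legitimate route, and the second factor is handled correctly: the reduction to $\gcd(a,b)=1$, the split according to whether $4ab$ is a square, the identity $\theta\mapsto\eta_+\theta$ under the Pell automorph, the window $\sqrt n<\theta<2\sqrt m$, and the lower bound $\eta_+\ge 2+\sqrt2>e$ (your ``$u\sqrt{ab}\ge\sqrt3$'' should be $\ge\sqrt2$, but the conclusion survives) all check out and give $O(1+\log m)$ solutions per orbit with constants that fit the stated bound. The genuine gap is exactly where you predicted it: the claim that the number of automorph orbits is at most $d(n)$. As written, your justification is that the primitive representation classes are parametrised by residues $\beta\pmod{2n}$ with $\beta^2\equiv\Delta\pmod{4n}$, ``numbering $d(n)$ in total.'' That count is false in general: when $\gcd(n,\Delta)$ is divisible by $p^2$ the congruence $\beta^2\equiv\Delta\pmod{4n}$ can have on the order of $p$ roots modulo $2n$ (e.g.\ $n=p^2$, $p^2\,\|\,\Delta$ forces only $p\mid\beta$ and then the congruence is automatic modulo $p^2$), which exceeds $d(n)=3$ for $p\ge5$. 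Moreover the passage from primitive representations to all representations --- summing the primitive counts for $n/f^2$ over $f^2\mid n$ --- is not carried out, and there is no slack left to absorb a weaker orbit bound, since your per-orbit estimate already consumes the factor $2$. The lemma is of course still true in these degenerate cases, but establishing the orbit bound there requires exploiting the forced divisibilities ($p\mid y$ when $p^2\mid\gcd(n,ab)$, etc.) or a careful treatment of invertible ideals of the non-maximal order $\mathbb Z[\sqrt{ab}]$ at primes dividing the conductor; neither is supplied. Since you yourself identify this as the arithmetic heart of the lemma and leave it resting on a citation that, in the stated form, is not correct, the proof is not complete. A cleaner repair is to bypass the class theory altogether in Estermann's spirit: attach to each solution the divisor pair it induces on $n$ via a composition identity such as $(ax_1x_2-by_1y_2)(ax_1x_2+by_1y_2)=n\bigl(a(x_1^2+x_2^2)-n\bigr)$, which yields the $d(n)$ classes directly and elementarily.
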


The following is probably in the literature, but we don't know where.
\begin{lemma}
\label{lem:two3}
Suppose that $n\in\mathbb N$, $b\in\mathbb Z$ and $S(P;n,b)$ denotes the number of integers $y\in[-P,P]$ such that $ny+b$ is a perfect square.  Let $k^2$ be the largest square dividing $(n,b)$.  Then
$$S(P;n,b) \ll n^{\varepsilon}\left(
1+ k\sqrt{\frac{P}{n}}
\right).$$
\end{lemma}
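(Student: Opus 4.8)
The plan is to count the pairs $(y,z)$ with $y\in[-P,P]$ and $ny+b=z^2$, so that $z$ runs over nonnegative integers with $z^2\le nP+b\le 2nP$ (say), giving $z\le\sqrt{2nP}$. For such a pair we need $z^2\equiv b\imod n$. First I would write $d=(n,b)$, and note that if $d\nmid z^2$ there is no solution, so the relevant $z$ lie in a union of residue classes mod $n$ determined by the congruence $z^2\equiv b\imod n$. Writing $n=k^2 n'$ with $n'$ squarefree times the part of $n$ coprime to the square part — more precisely, with $k^2$ the largest square dividing $d$ — one reduces the congruence and counts its solutions. The number of solutions of $z^2\equiv b\imod n$ in $\mathbb Z_n$ is $O(n^\varepsilon)$ times a factor governed by the square part, and a careful bookkeeping (splitting $n$ into prime powers and analyzing $z^2\equiv b$ modulo each $p^a$, distinguishing $p\mid(n,b)$ from $p\nmid b$) shows that the solutions, if any, form $O(n^\varepsilon k)$ residue classes modulo $n$, \emph{not} modulo a smaller modulus — the key point being that the solution set is a union of classes mod $n/k$ lifted up, contributing the factor $k$.

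Next, for each admissible residue class the number of $z\le\sqrt{2nP}$ in it is $\ll 1+\sqrt{2nP}/n = 1+\sqrt{2P/n}$; if instead the class is mod $n/k$ we get $\ll 1 + k\sqrt{2P/n}$ per class mod $n/k$. Multiplying by the $O(n^\varepsilon)$ count of classes yields
\[
S(P;n,b)\ll n^\varepsilon\Bigl(1+k\sqrt{\tfrac{P}{n}}\Bigr),
\]
after absorbing constants into the $n^\varepsilon$. One has to be slightly careful that the ``$1+$'' term is not overcounted: the number of residue classes is $n^\varepsilon$, so even the constant term is $\ll n^\varepsilon$, which is fine.

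The main obstacle is the precise determination of the structure of the solution set of $z^2\equiv b\imod n$ and verifying that the correct ``width'' parameter is exactly $k$ (the largest square dividing $(n,b)$) and not something coarser or finer. Concretely, for a prime power $p^a\parallel n$ with $p^\beta\parallel b$: if $\beta\ge a$ then $z\equiv 0\imod{p^{\lceil a/2\rceil}}$ gives all solutions, a single class mod $p^{\lfloor a/2\rfloor}$ worth of lifts; if $\beta<a$ then $\beta$ must be even, $z=p^{\beta/2}w$ with $w^2\equiv b/p^\beta\imod{p^{a-\beta}}$ and $p\nmid b/p^\beta$, which has $O(1)$ solutions mod $p^{a-\beta}$ (for $p$ odd; the prime $2$ needs the usual separate treatment with at most $4$ solutions), i.e. $O(1)$ classes mod $p^a$ after multiplying back by $p^{\beta/2}\mid p^{\lfloor\beta/2\rfloor}$... — assembling these local counts via CRT, the number of global classes is $\ll d(n)\ll n^\varepsilon$ and the lattice of solutions has index $\gg n/k$ in $\mathbb Z$, which is exactly what the displayed bound requires. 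Once this local analysis is pinned down, the counting of integer points in an interval inside each arithmetic progression is entirely routine.
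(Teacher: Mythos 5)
Your argument is correct, but it is not the route the paper takes, so let me compare. You attack the congruence $z^2\equiv b\imod{n}$ directly and show, by a prime-by-prime analysis plus CRT, that its solution set is a union of $O(n^{\varepsilon})$ full arithmetic progressions of common difference $n/k$; you then count points of such progressions in the interval of admissible $z$, whose length is $\ll\sqrt{nP}$, getting $\ll n^{\varepsilon}(1+k\sqrt{P/n})$. Your local analysis is right: for $p^{a}\,\|\,n$, $p^{\beta}\,\|\,b$, the solutions mod $p^{a}$ form one class mod $p^{\lceil a/2\rceil}=p^{a}/k_p$ when $\beta\ge a$, and $O(1)$ classes mod $p^{a-\beta/2}=p^{a}/k_p$ when $\beta<a$ is even (your phrase ``$O(1)$ classes mod $p^a$'' should read ``mod $p^{a-\beta/2}$'', but your final conclusion about the index $n/k$ is the correct one); also, when $|b|>nP$ the admissible $z$ no longer satisfy $z\le\sqrt{2nP}$, but the interval $[z_-,z_+]$ still has $z_+^2-z_-^2\le 2nP$ and hence length $\le\sqrt{2nP}$, so nothing is lost. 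The paper instead writes $(n,b)=k^2l$ with $l$ squarefree, substitutes $x=klz$ to reduce the equation to $lz^2=n_1y+b_1$ with $n_1=n/(n,b)$ and $(n_1,b_1)=1$, restricts $z$ to the $O(n^{\varepsilon})$ classes $r$ mod $n_1$ with $lr^2\equiv b_1\imod{n_1}$, and then, writing $z=z_0+n_1v$ within a class, uses the identity $lv(2z_0+n_1v)=y-y_0$ to force either $v\le(P/(ln_1))^{1/2}$ or $|2z_0+n_1v|\le 2(Pn_1/l)^{1/2}$, each giving $\ll 1+\sqrt{P/(ln_1)}=1+k\sqrt{P/n}$ choices. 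The two proofs yield the same bound; yours makes the provenance of the factor $k$ completely transparent (it measures how far the congruence fails to determine $z$ modulo $n$) at the cost of the Hensel-type case analysis at each prime, while the paper's avoids all local considerations via the elementary two-factor trick but leans on the reduction $(lb_1,n_1)=1$ to invoke the standard divisor-type bound for the number of residue classes.
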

It is clear from the example $n=1$, $b=0$ that this is essentially best possible.  Gallagher’s larger sieve gives a somewhat sharper bound but the above is adequate for our purposes.
\begin{proof}
Write $(n,b)=k^2l$ so that $l$ is square free, and let $n_1=n/(n,b)$ and $b_1=b/(n,b)$, so that $(n_1,b_1)=1$.  If $x$ satisfies $x^2=ny+b$ then $kl|x$.  We wish to bound the number of pairs $z$, $y$ with $z\ge 0$, $y\in[-P,P]$ and $lz^2=n_1y+b_1$.  Since $(n_1,b_1)=1$ we have $(l,n_1)=1$.  Let $\mathcal R$ denote the set of residue classes $r$ modulo $n_1$ such that $lr^2\equiv b_1\imod{n_1}$.  As $(lb_1,n_1)=1$ we have $\mathrm{card}\,\mathcal R\ll n_1^{\varepsilon}$.  Let $r\in\mathcal R$.  Then it suffices to bound the number of solutions with $z\equiv r\imod{n_1}$.  Let $z_0$ be the least such solution and $y_0$ the corresponding value of $y$.  Thus for any other solution $z$ we have $z=z_0 + n_1 v$ where $v\ge 0$.  Hence $lv(2z_0+n_1v) = y-y_0$ and so $v|2z_0+n_1v|\le 2P/l$.  The number of possible $v\le (P/ln_1)^{1/2}$ is
\begin{equation}
\label{eq:two1}
\ll 1+\sqrt{\frac{P}{ln_1}}
\end{equation}
and when $v>(P/ln_1)^{1/2}$ we have $|2z_0+n_1v|\le 2(Pn_1/l)^{1/2}$ and again the number of such $v$ satisfies (\ref{eq:two1}).
\end{proof}

We apply these results to obtain bounds for a general quadratic polynomial in two variables.  In the lemma below the results depend on whether the three quantities $a^2+c^2$, $\Delta$ and $\Theta$ are zero or not.  As discussed in the remark below the lemma, the lemma takes care of all eight possibilities.

\begin{lemma}
\label{lem:two4}
Let $P\ge 2$ and $Q\ge2$, suppose that $a$, $b$, $c$, $d$, $e$ and $f$ are integers in $[-Q,Q]$ and let $\Delta=4ac-b^2$ and $\Theta = 4acf+ebd-ae^2-cd^2-fb^2$.  Let N(P,Q) denote the number of solutions of
$$ax^2+bxy+cy^2+dx+ey+f=0$$
in integers $x$, $y$ with $|x|\le P$ and $|y|\le P$.  There are various cases.\par
\noindent (i) If $(a^2+c^2)\Delta\Theta\not=0$, then $N(P,Q) \ll (PQ)^{\varepsilon}$.
\par
\noindent (ii) If $(a^2+c^2)\Delta\not=0$, $\Theta=0$ and $-\Delta$ is not a perfect square, then $N(P,Q) \ll 1$.
\par
\noindent (iii) If $a=c=0$ and $\Delta\Theta\not=0$, then $N(P,Q)\ll (PQ)^{\varepsilon}$.
\par
\noindent (iv) If $(a^2+c^2)\Delta\not=0$, $\Theta=0$ and $-\Delta$ is a perfect square, then $N(P,Q) \ll P$.
\par
\noindent (v) If $a\not=0$, $2ae-bd=\Delta=0$ and $d^2-4af$ is not a perfect square, or if $c\not=0$, $2cd-be=\Delta=0$ and $e^2-4cf$ is not a perfect square, then $N(P,Q) =0$.
\par
\noindent (vi) If $a\not=0$, $l=2ae-bd\not=0$, $\Delta=0$ and $k^2$ is the largest square dividing $|l|$, or if $c\not=0$, $l=2cd-be\not=0$, $\Delta=0$ and $k^2$ is the largest square dividing $|l|$, then
$$N(P,Q) \ll Q^{\varepsilon} \left(
1+k\sqrt{\frac{P}{|l|}}
\right).$$
\par
\noindent (vii) If $a\not=0$, $2ae-bd=\Delta=0$ and $d^2-4af$ is a perfect square, or if $c\not=0$, $2cd-be=\Delta=0$ and $e^2-4cf$ is a perfect square, then $N(P,Q) \ll P$.
\par
\noindent (viii) If $a=c=\Theta=0$ and $\Delta\not=0$, then $N(P,Q)\ll P$.
\par
For completeness we also state,
\par
\noindent (ix) in all other cases $N(P,Q)\ll P^2$.
\end{lemma}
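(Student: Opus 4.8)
The plan is to reduce the conic $ax^2+bxy+cy^2+dx+ey+f=0$ to simpler diophantine shapes by completing squares, and then to invoke Lemmata \ref{lem:two1}--\ref{lem:two3} for the genuinely quadratic cases. I would organize the proof around the three invariants $a^2+c^2$, $\Delta$ and $\Theta$. Note first that $\Delta$ is essentially the discriminant of the quadratic part and $\Theta$ is (up to sign) the determinant of the $3\times3$ matrix of the conic, so $\Theta=0$ signals degeneracy (a pair of lines or a double line), while $\Delta$ distinguishes the ellipse/parabola/hyperbola trichotomy.

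First, the nondegenerate cases (i)--(iv), where $(a^2+c^2)\Delta\ne0$. Here I would assume by symmetry $a\ne0$ and multiply through by $4a$, writing $4a\cdot(ax^2+bxy+cy^2+\cdots)=(2ax+by+d)^2-\Delta y^2 - 2(bd-2ae)y + d^2-4af$; absorbing the linear term in $y$ into a second completion of the square and clearing denominators leads to an equation of the form $u^2 - \Delta v^2 = m$ or $u^2+|\Delta|v^2=m$ with $u=2ax+by+d$ (and $v$ an affine image of $y$), where $m$ is, up to a bounded factor, $2a\Theta$ — in particular $m=0$ precisely when $\Theta=0$. When $\Theta\ne0$ and $\Delta<0$ we get a positive-definite form representing a fixed $m\ne0$, so Lemma \ref{lem:two1} gives $\ll d(|m|)\ll (PQ)^\varepsilon$, which is (i) in that sub-case; when $\Theta\ne0$ and $\Delta>0$ we have a Pell-type form $u^2-\Delta v^2=m$ with $|u|\ll PQ$, and Lemma \ref{lem:two2} gives $\ll d(|m|)(1+\log(PQ))\ll(PQ)^\varepsilon$, which is (i) and also (iii). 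When $\Theta=0$ the equation becomes $u^2=\Delta v^2$: if $\Delta<0$ this forces $u=v=0$, hence $O(1)$ solutions, giving (ii); if $-\Delta=\Delta'^2$ is a perfect square it splits as $(u-\Delta' v)(u+\Delta' v)=0$, a pair of lines, each contributing $\ll P$ points in the box, giving (iv). (The case $\Delta>0$, $\Theta=0$ cannot have $-\Delta$ a square unless $\Delta=0$, so it is subsumed.)

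Next, the degenerate quadratic-part cases $\Delta=0$ with $a\ne0$ (the $c\ne0$ case being symmetric). Completing the square as above, $4a$ times the equation is $(2ax+by+d)^2 = 2(2ae-bd)y + (4af-d^2) = 2ly + (4af-d^2)$ with $l=2ae-bd$. If $l=0$ the right side is a constant: the equation says $(2ax+by+d)^2=4af-d^2$, so if $4af-d^2$ is not a perfect square there are no solutions — case (v) — and if it is a perfect square we again get a union of (at most two) lines $2ax+by+d=\pm\sqrt{4af-d^2}$, each giving $\ll P$ lattice points, which is case (vii). If $l\ne0$, then for each of the $\ll P$ values of $y$ we must have $ly+\tfrac12(4af-d^2)$ (times $2$) equal to a perfect square; more precisely $2ly+(4af-d^2)$ is a square, and applying Lemma \ref{lem:two3} with $n=2|l|$, $b=4af-d^2$ — noting the largest square dividing $\gcd(2|l|,4af-d^2)$ is $\asymp k^2$ — bounds the number of admissible $y$ by $\ll Q^\varepsilon(1+k\sqrt{P/|l|})$, and each admissible $y$ determines $x$ up to $O(1)$ choices; this is case (vi).

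Finally, the cases with $a=c=0$. Then $\Delta=-b^2$ and $\Theta=ebd$ (checking: $\Theta=ebd-0-0-0=ebd$). If $\Delta\ne0$, i.e. $b\ne0$, the equation is $bxy+dx+ey+f=0$, i.e. $(bx+e)(by+d)=ed-bf$; when $\Theta=ebd\ne0$ the right side $ed-bf$ need not vanish but in any case $bx+e$ ranges over divisors of a fixed nonzero or zero quantity: if $ed-bf\ne0$ each divisor choice is $\ll(PQ)^\varepsilon$, giving (iii), while if $ed-bf=0$ — which with $b\ne0$ forces $ed=bf$ — the locus is the union of the two lines $bx+e=0$ and $by+d=0$, each with $\ll P$ points, giving (viii) (here $\Theta=ebd$ may still be nonzero, so (viii) as stated with $\Theta=0$ actually sits inside this analysis; when $\Theta=ebd=0$ and $\Delta=-b^2\ne0$ one of $e,d$ vanishes and the factorization still applies). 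The remaining possibilities — $a=c=b=0$ (the equation is linear, $\ll P$ or $\ll P^2$ solutions depending on whether $d,e$ both vanish) and any leftover degenerate configuration — fall under the trivial bound $N(P,Q)\ll P^2$ of case (ix).

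I expect the main obstacle to be bookkeeping rather than depth: one must verify that the ``largest square dividing $\gcd(2|l|,4af-d^2)$'' matches the $k^2$ in the statement up to harmless factors of $2$, and one must confirm that the eightfold case split in the lemma exactly exhausts the possibilities (the remark preceding the lemma asserts this, indexed by the vanishing/non-vanishing pattern of $a^2+c^2$, $\Delta$, $\Theta$, with the extra refinement by whether certain auxiliary quantities are perfect squares). Keeping the $a\ne0$ versus $c\ne0$ symmetry straight, and handling the boundary overlaps between the cases consistently, is where care is needed; none of the individual estimates goes beyond Lemmata \ref{lem:two1}--\ref{lem:two3} and counting divisors.
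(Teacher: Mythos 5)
Your proposal follows essentially the same route as the paper: complete the square in $x$ to get $(2ax+by+d)^2+\Delta y^2+2(2ae-bd)y+4af-d^2=0$, handle $\Delta=0$ via Lemma \ref{lem:two3} (cases (v)--(vii)), complete the square again when $\Delta\not=0$ to reach a binary quadratic form equal to a multiple of $\Theta$ and invoke Lemmas \ref{lem:two1} and \ref{lem:two2} (cases (i), (ii), (iv)), and factor $(bx+e)(by+d)=ed-bf$ when $a=c=0$ (cases (iii), (viii)).

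Two arithmetic slips make parts of your write-up internally inconsistent, although neither affects the architecture. First, the second completion of squares gives $\Delta(2ax+by+d)^2+(\Delta y+2ae-bd)^2=-4a\Theta$: the coefficient $\Delta$ sits on the square involving $x$, not (with a minus sign) on the square involving $y$, so the positive definite case is $\Delta>0$ and the Pell/hyperbolic case is $\Delta<0$ --- you have these swapped. This is harmless for (i), where either of Lemmas \ref{lem:two1} and \ref{lem:two2} yields $(PQ)^{\varepsilon}$, but under your stated form $u^2=\Delta v^2$ the splitting into two lines in case (iv) would require $\Delta$ (not $-\Delta$) to be a perfect square, contradicting the statement; with the correct identity, $\Theta=0$ gives $\Delta X^2+Y^2=0$, which forces $X=Y=0$ unless $-\Delta$ is a perfect square, exactly as claimed. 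Second, when $a=c=0$ one has $\Theta=ebd-fb^2=b(ed-bf)$, not $ebd$; with the correct value, $\Theta\not=0$ together with $b\not=0$ is precisely the condition $ed-bf\not=0$ needed for the divisor-counting bound in (iii), while $\Theta=0$ with $b\not=0$ forces $ed-bf=0$ and gives the two lines of (viii), so the mismatch you worry about (``$\Theta=ebd$ may still be nonzero'') disappears. The remaining points --- the application of Lemma \ref{lem:two3} with $n=2|l|$, the bounded discrepancy between the largest square dividing $(2|l|,d^2-4af)$ and the $k^2$ of the statement, and the exhaustiveness of the case split --- are handled correctly and as in the paper.
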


Apropos our previous remark we see that (i) takes care of the case $(a^2+c^2)\Delta\Theta\not=0$, (ii) and (iv) collectively take care of the case $(a^2+c^2)\Delta\not=0$, $\Theta=0$ and (iii) takes care of the case $\Delta\Theta\not=0$, $a^2+c^2=0$.  When $(a^2+c^2)\not=0$ and $\Delta=0$ there are various subcases and these are taken care of by (v), (vi), (vii).  Since these subcases are indifferent to the value of $\Theta$ they deal with both the case $(a^2+c^2)\Theta\not=0$, $\Delta=0$ and the case $(a^2+c^2)\not=0$ and $\Delta=\Theta=0$.  The case $\Delta\not=0$, $a=c=\Theta=0$ is taken care of in (viii).  Finally if $a^2+c^2=\Delta=0$, then automatically $\Theta=0$ so the remaining cases are dealt with by (ix).  It is not necessary for our purposes but in this last situation when $d$ or $e$ is non-zero the bound could be replaced by $2P+1$.

\begin{proof} If $a=c=0$, then $\Delta = -b^2$ and $\Theta = b(ed-fb)$.  Hence if $\Delta\not=0$, so that $b\not=0$, multiplication by $b$ gives
$$(bx+e)(by+d) + bf-ed=0$$
and this has $\ll (PQ)^{\varepsilon}$ solutions when $\Theta\not=0$ and $\ll P$ solutions when $\Theta=0$.  This deals with cases (iii) and (viii) and so, apart from the trivial case (xi), we can suppose that $a^2+c^2\not=0$.  Thus, without loss of generality, we suppose that $a\not=0$.  Completing the square gives the equation
$$(2ax+by+d)^2+\Delta y^2 +2(2ae-bd)y+4af-d^2=0.$$
\par Suppose $\Delta=0$.  Then we are asking that $2(bd-2ae)y +d^2-4af$ be a perfect square.  If $2ae-bd=0$ and $d^2-4af$ is not a perfect square, then we have no solutions and we are in case (v).  If $bd\not=2ae$, then we are in case (vi) and this follows from Lemma \ref{lem:two3}.  If $bd=2ae$  and $d^2-4af$ is a perfect square, then $x$ is determined by $y$ and we get case (vii).
\par
Thus we may suppose that $\Delta\not=0$.  Completing the square once more gives
\begin{equation}
\label{eq:two2}
\Delta(2ax+by+d)^2 +(\Delta y + 2ae-bd)^2 + 4a\Theta=0.
\end{equation}
If $\Theta=0$ and $-\Delta$ is not a perfect square, then solutions are only possible with
$$2ax+by+d=\Delta y +2ae-bd=0$$
and so $y$ is determined by the second equation and then $x$ is determined by the first one. This gives case (ii).\par
If $\Theta=0$ and $-\Delta$ is a non-zero perfect square, say $m^2$, then the equation becomes
$$(2amx+mby+md)^2-(m^2 y+2ae-bd)^2=0.$$
Taking any choice for $y$ gives $\ll 1$ choices for $x$ and gives case (iv).
\par
Finally we may suppose  that $(a^2+c^2)\Delta\Theta\not=0$, case (i).   We can write the equation (\ref{eq:two2}) in the form
$$\Delta X^2 +Y^2= -4a\Theta.$$
If $\Delta>0$ and $-4a\Theta\le 0$, then there is at most one solution in integers $X$ and $Y$.  If $\Delta>0$ and $-4a\Theta>0$, then we can appeal to Lemma \ref{lem:two1} and we see that the number of solutions in $X$ and $Y$ is $\ll Q^{\varepsilon}$.  For each such pair $X$, $Y$ there are $\ll 1$ pairs $x$ and $y$ with $\Delta y+2ae-bd=Y$ and $2ax+by+d=X$.  If $\Delta<0$ and $-4a\Theta>0$, then we write the equation in the form
$$Y^2 - (-\Delta)X^2 = -4a\Theta$$
and use Lemma \ref{lem:two2} instead.  On the other hand if $4a\Theta>0$, then we can rewrite the equation as
$$(-\Delta) X^2 -Y^2 = 4a\Theta$$
and proceed in the same way.
\end{proof}

We will need a bound for $\rho_{\mathcal P}(d^2)$ when $d$ is square free.

\begin{lemma}
\label{lem:two5}
Let $k=3$ or $4$ and $\mathcal P$ be as in (\ref{eq:one7}).  Then
\[
\rho_{\mathcal P}(p^2)\ll p^{2s-2},
\]
and, for squarefree $d$,
\[
\rho_{\mathcal P}(d^2)\ll d^{2s-2+\varepsilon}.
\]
\end{lemma}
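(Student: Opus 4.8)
The plan is to exploit multiplicativity of $\rho_{\mathcal P}$ on squarefree arguments together with the additive structure $\mathcal P(\mathbf x)=cx_1^k+\mathcal C^*(x_2,\ldots,x_s)$. Since $d$ is squarefree, $\rho_{\mathcal P}(d^2)=\prod_{p\mid d}\rho_{\mathcal P}(p^2)$ by the Chinese Remainder Theorem, so the second bound follows from the first once we absorb the product over $p\mid d$ of the implied constants into $d^{\varepsilon}$ in the usual way (the number of prime factors of $d$ is $O(\log d/\log\log d)$, and any fixed constant $C$ raised to that power is $d^{o(1)}$). Hence the entire content is the prime-power estimate $\rho_{\mathcal P}(p^2)\ll p^{2s-2}$.

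For the prime-power bound I would count $\mathbf x\in\mathbb Z_{p^2}^s$ with $cx_1^k+\mathcal C^*(x_2,\ldots,x_s)\equiv 0\imod{p^2}$ by first fixing $(x_2,\ldots,x_s)\in\mathbb Z_{p^2}^{s-1}$ (there are $p^{2(s-1)}$ choices) and then counting $x_1\in\mathbb Z_{p^2}$ with $cx_1^k\equiv n\imod{p^2}$, where $n=-\mathcal C^*(x_2,\ldots,x_s)$. Write $c=p^{\gamma}c_0$ with $p\nmid c_0$, $\gamma\in\{0,1\}$ (note $\gamma\ge 2$ is impossible for all but finitely many $p$, and for those finitely many $p$ the trivial bound $\rho_{\mathcal P}(p^2)\le p^{2s}$ is absorbed into the implied constant). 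The number of $x_1\bmod p^2$ solving $c_0x_1^k\equiv p^{-\gamma}n\imod{p^{2-\gamma}}$ (when solvable) is $O(1)$ uniformly in $n$ for $p>k$: over $\mathbb Z/p^{2-\gamma}$ the polynomial $c_0T^k-m$ has at most $k\cdot p^{v}$ roots where $p^v\mid\!\mid\gcd$ of relevant quantities, but crudely one always has $O(p)$ roots mod $p^2$ and $O(1)$ roots mod $p$; the point is that summing over the $p^{2(s-1)}$ choices of $(x_2,\ldots,x_s)$ and using $O(p)$ per choice already gives $O(p^{2s-1})$, which is not good enough, so one must gain an extra factor $p$.

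The gain comes from noting that for \emph{most} $(x_2,\ldots,x_s)$ the congruence $cx_1^k\equiv n\imod{p^2}$ has no solution at all, or only $O(1)$ solutions. Precisely, partition the $(x_2,\ldots,x_s)$ according to $w:=v_p(n)$ (capped at $2$). For the equation $c_0x_1^k\equiv p^{-\gamma}n\imod{p^{2-\gamma}}$ to be solvable we need $v_p(n)\ge\gamma$ and then the number of $x_1\bmod p^2$ is $O(1)$ when $v_p(n)=\gamma$, and $O(p)$ only in the degenerate range $v_p(n)>\gamma$; but the number of $(x_2,\ldots,x_s)\bmod p^2$ with $\mathcal C^*\equiv 0\imod p$ is at most $\rho_{\mathcal C^*}(p)\,p^{s-1}\ll p^{s-1}\cdot p^{s-2}=p^{2s-3}$ provided $p\nmid\mathcal C^*$ as a polynomial mod $p$ — here I invoke the Lang–Weil / elementary bound $\rho_{\mathcal C^*}(p)\ll p^{s-2}$, valid because $\mathcal C^*$ is not identically zero mod $p$ for $p$ large (and the finitely many bad $p$ are again absorbed), and the hypothesis that $\mathcal C^*$ is \emph{not} of the form $a(b_2x_2+\cdots+b_sx_s)^3$ guarantees $\mathcal C^*$ does not vanish identically as a polynomial. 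Combining: the contribution from $v_p(n)=\gamma$ is $\ll p^{2(s-1)}\cdot O(1)=O(p^{2s-2})$, and from $v_p(n)>\gamma\ge 0$ (so $p\mid n$) is $\ll p^{2s-3}\cdot O(p)=O(p^{2s-2})$. This yields $\rho_{\mathcal P}(p^2)\ll p^{2s-2}$.

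The main obstacle is the bookkeeping around small primes and the $p$-adic valuation of $c$: one must be careful that the bound $O(1)$ for the number of $k$-th roots of a unit mod $p^2$ really holds uniformly (it does, since $T^k-u$ has at most $k$ roots mod $p$ for $p\nmid k$ and Hensel lifts them uniquely), and that the degenerate strata where $v_p(n)$ is large are genuinely thin — this is exactly where one needs the subvariety bound $\rho_{\mathcal C^*}(p)\ll p^{s-2}$, which in turn is where the non-degeneracy hypothesis on $\mathcal C^*$ enters. Everything for the finitely many exceptional primes (those dividing $c$, $k$, or the leading data of $\mathcal C^*$, or those for which the subvariety count is not yet in the asymptotic range) is handled by the trivial bound and absorbed into the implied constant, which is legitimate since there are only finitely many such $p$ depending on $\mathcal P$ alone.
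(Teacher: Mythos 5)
Your proof is correct and follows essentially the same route as the paper: reduce to the prime case by multiplicativity (with the divisor-count giving $d^{\varepsilon}$), bound the stratum where $p\mid\mathcal C^*(x_2,\ldots,x_s)$ by the elementary $\ll p^{s-2}$ zero-count for a nonzero form mod $p$, and handle the nondegenerate stratum by Hensel lifting of the $O(1)$ $k$-th roots. The paper merely organises the same count by first solving mod $p$ and then lifting nonsingular points, and absorbs the finitely many bad primes exactly as you do.
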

\begin{proof}
Since $\rho$ is a multiplicative function it suffices to show the first bound.  Furthermore we may suppose that $p>3$ and larger than any of the coefficients of $\mathcal P$.  We are counting the number of solutions of
\begin{equation}
\label{eq:two3}
cx_1^k+\mathcal C^*(x_2,\ldots,x_s) \equiv 0 \imod{p^2}
\end{equation}
Consider first the number of solutions of
\begin{equation}
\label{eq:two4}
cx_1^k+\mathcal C^*(x_2,\ldots,x_s) \equiv 0 \imod p.
\end{equation}
Let $Z$ denote the number of solutions of $C^*(x_2,\ldots,x_s) \equiv 0 \imod p$.  By Lemma 3.1 of \S4.3 of Schmidt \cite{WS04}, $Z\le 3p^{s-2}$.  Thus the number of solutions of (\ref{eq:two3}) with $x_1\equiv 0\imod p$ is $\ll 3p^s.p^{s-2}=3p^{2s-2}$.  The number of solutions of (\ref{eq:two4}) with $x_1\not=0$ is at most $kp^{s-1}$.  By the initial assumption on $p$ each solution of (\ref{eq:two4}) with $x_1\not\equiv 0\imod p$ is non-singular and so lifts to at most $p^{s-1}$ solutions modulo $p^2$.  Thus the number of solutions to (\ref{eq:two3}) with $x_1\not\equiv 0\imod p$ is $\le 3p^{2s-2}$.
\end{proof}

\Section{Proof of Theorem \ref{thm:one3}}
\label{sec:three}

\noindent If there should be a prime $p$ with $\rho_{\mathcal P}(p^2)=p^{2s}$, then $\mathcal P$ has $p^2$ as a fixed divisor, $\mathfrak S_{\mathcal P}$ is perforce $0$ and $N_{\mathcal P}(P)=0$.  Hence we can assume that, for every prime $p$,
\begin{equation}
\label{eq:three1}
\rho_{\mathcal P}(p^2)<p^{2s}.
\end{equation}
We have $\mathfrak S_{\mathcal P}=0$.  Thus, by (\ref{eq:one6}), given $\eta>0$ when $\xi>\xi_0(\eta)$ for some $\xi_0(\eta)$ we have
\begin{equation}
\label{eq:three2}
0< \prod_{p\le \xi} \left(
1-\frac{\rho_{\mathcal P}(p^2)}{p^{2s}}
\right)<\eta.
\end{equation}
Let
\begin{equation}
\label{eq:three3}
D=\prod_{p\le \xi}p.
\end{equation}
Then, by (\ref{eq:one2}), we have
\[
N_{\mathcal P}(\mathbf P) \le \sum_{\substack{\mathbf x\\
|x_j|\le P_j}} \sum_{m^2|(D^2,\mathcal P(\mathbf x))}\mu(m) =\sum_{m|D} \mu(m) \sum_{\substack{\mathbf x\\
|x_j|\le P_j\\
m^2|\mathcal P(\mathbf x)}} 1.
\]
Then by apportioning the $x_j$ into residue classes modulo $m^2$ we find that the inner sum is
\[
\rho_{\mathcal P}(m^2) \left(
\frac{2^sP_1\ldots P_s}{m^{2s}} +O\big(m^{2-2s}P_1\ldots P_s(\min_j P_j)^{-1} +1\big)
\right).
\]
The trivial bound $\rho_{\mathcal P}(m^2)\le m^{2s}$ then gives the approximation
\[
2^sP_1\ldots P_s\frac{\rho_{\mathcal P}(m^2)}{m^{2s}} + O\big( m^2P_1\ldots P_s(\min_j P_j)^{-1}+m^{2s}\big).
\]
Hence
\[
0\le N_{\mathcal P}(\mathbf P) \le 2^sP_1\ldots P_s\sum_{m|D} \rho_{\mathcal P}(m^2)\frac{\mu(m)}{m^{2s}} + O_{\xi}\big(P_1\ldots P_s(\min_j P_j)^{-1}\big).
\]
and so, by (\ref{eq:three2}),
\[
0\le \limsup_{{\min_j}P_j\rightarrow\infty} \frac{N_{\mathcal P}(\mathbf P)}{2^sP_1\ldots P_s} \le \sum_{m|D} \rho_{\mathcal P}(m^2)\frac{\mu(m)}{m^{2s}} = \prod_{p\le \xi} \left(
1-\frac{\rho_{\mathcal P}(p^2)}{p^{2s}}
\right)<\eta.
\]

\Section{Proof of Theorem \ref{thm:one1}}
\label{sec:four}

\noindent The proof is inductive.  The case $s=2$ can be deduced from results in the literature.  When the polynomial $\mathcal C(x,1)$ is irreducible over $\mathbb Q$, then, by Bombieri and Schmidt \cite{BS87}, when $n\in\mathbb Z\setminus\{0\}$ the number of solutions of the Thue equation
\begin{equation}
\label{eq:four1}
\mathcal C(x,y)=n
\end{equation}
is $\ll |n|^{\varepsilon}$.  When the polynomial is reducible over $\mathbb Q$, since both variables occur explicitly and it is not of the form $a(b_1x_1+b_2x_2)^3$, then (\ref{eq:four1}) is of the form
\[
(Ax+By)(Cx^2+Dxy+Ey^2)=Fn
\]
for integral $A,..,,F$ with $F\not=0$ and $Ax+By$ not a factor of $Cx^2+Dxy+Ey^2$.  Then the factors are complementary divisors $d$, $d'$ of $Fn$.  Without loss of generality we may suppose that $A\not=0$.  Then $x=(d-By)/A$ and so by substitution
\[
C(d-By)^2 + DA(d-By)+EA^2y^2=A^2d'.
\]
The coefficient of $y^2$ here is $CB^2-DAB+EA^2\not =0$ since $Ax+By$ is not a factor of $Cx^2+Dxy+Ey^2$.  Thus there are at most $2$ choices for $y$ and then $x$ is fixed, so the number of solutions of $\mathcal P(x,y)=\mathcal P(X,Y)\not=0$ is $\ll P^{\varepsilon}$ times the number of choices for $X,Y$.  The remaining situation $\mathcal P(x,y)=\mathcal P(X,Y)=0$ clearly has $\ll P^2$ solutions.
\par
Now suppose that $s>2$.  It is useful to first transform the form so that at least two of the variables, for example $x_1$ and $x_2$, have non-zero $x_1^3$ and $x_2^3$ terms.  Suppose on the contrary that $c_{111}=0$.  If there is a $j$ so that $c_{11j}\not=0$ or $c_{1jj}\not=0$ or $c_{jjj}$, then replace $x_j$ by $x_j+\lambda x_1$ where $\lambda$ is a non-zero integer at our disposal.  Then $x_1^3$ will now occur as $(3c_{11j}\lambda + 3c_{1jj}\lambda^2 +c_{jjj}\lambda^3)x_1^3$.  This has a non-zero coefficient for a sufficiently large $\lambda$.  If $c_{11j}=c_{1jj}=c_{jjj}=0$ for every $j$, then since $x_1$ occurs explicitly there will be $1<j<k$ such that $c_{1jk}\not=0$.  Now replace $x_j$ and $x_k$ by $x_j+\lambda x_1$ and $x_k+\mu x_1$ respectively where now $\lambda$ and $\mu$ are at our disposal.  Then $x_1^3$ occurs as
\[
(c_{1jk}\lambda\mu + c_{jjk}\lambda^2\mu + c_{jkk}\lambda\mu^2)x_1^3.
\]
If $c_{jkk}\not =0$ then one can take $\lambda=1$ and $\mu$ sufficiently large, and if $c_{jkk}=0$ and  $c_{jjk}\not=0$, then one can take $\mu=1$ and $\lambda$ large..  Finally if $c_{jjk}=c_{jkk}=0$ then one can take $\lambda=\mu=1$.  The transformations in each case are unimodular, so invertible and the new forms therefore represent the same numbers as the old ones, and {\it vice versa}.  The region of interest becomes a parallelepiped, but can certainly be accommodated in $[-cP,cP]^s$ for a suitable constant $c$.  The process can be repeated to ensure that at least two variables $x_j$ appear explicitly as $x_j^3$.  Note that the transformation $x_k$ to $x_k+\lambda_k x_j$ $(k\not=j)$ does not alter the coefficient of $x_k^3$
\par
Henceforward we can suppose that two of our variables, say $x$ and $y$, are such that $x^3$ and $y^3$ occur explicitly.  Denote the remaining variables by $\mathbf z$.  Thus
\begin{multline*}
\mathcal C(\mathbf x) = Ax^3 + Bx^2y + Cxy^2 + Dy^3 + \\
x^2\mathcal L_1(\mathbf z) + xy \mathcal L_2(\mathbf z) + y^2\mathcal L_3(\mathbf z) + x\mathcal Q_1(\mathbf z) + y\mathcal Q_2(\mathbf z) + \mathcal C_1(\mathbf z)
\end{multline*}
where $\mathcal L_j$, $\mathcal Q_j$ and $\mathcal C_1$ are linear, quadratic and cubic forms respectively.
\par
If the $\mathcal L_j$, $\mathcal Q_j$ are all identically $0$, then $\mathcal C(\mathbf x) = \mathcal C_0(x,y) + \mathcal C_1(\mathbf z)$, where $\mathcal C_0$ is a binary cubic form.  Moreover the number of solutions of $\mathcal C_0(x,y) - \mathcal C_0(X,Y)=m$ is bounded by the number with $m=0$, so we can appeal to the case $s=2$.
\par
Thus we can assume that $AD\not=0$ and not all of $\mathcal L_j$, $\mathcal Q_j$ are identically $0$.  Let $R(n)$ denote the number of solutions of $\mathcal C(\mathbf x)= n$ with $\mathbf x\in[-P,P]^s$, where here $\mathbf x$ is a shorthand for $(x,y,z_1,\ldots,z_{s-2})$.  We sort the solutions according to $\mathbf z$.  Thus
$$R(n)=\sum_{\mathbf z\in[-P,P]^{s-2}} R(n,\mathbf z)$$
where $R(n,\mathbf z)$ is the number of solutions $x,y$ of
\begin{multline*}
Ax^3 + Bx^2y + Cxy^2 + Dy^3 + \\
x^2\mathcal L_1(\mathbf z) + xy \mathcal L_2(\mathbf z) + y^2\mathcal L_3(\mathbf z) + x\mathcal Q_1(\mathbf z) + y\mathcal Q_2(\mathbf z) + \mathcal C_1(\mathbf z) = n
\end{multline*}
with $x$, $y\in[-P,P]$.  The object of the theorem, namely the number $M(P)$ of solutions of $\mathcal C(\mathbf x)= \mathcal C(\mathbf X)$ can be written as
$$\sum_{n\in\mathbb Z} R(n)^2.$$
By Cauchy's inequality this is at most
$$(2P+1)^{s-2} \sum_{\mathbf z \in [-P,P]^{2s-2}} \sum_{n\in\mathbb Z} R(n,\mathbf z)^2.$$
Thus the equation to be considered becomes
\begin{align*}
Ax^3 + Bx^2y + Cxy^2 + Dy^3 + x^2\mathcal L_1(\mathbf z) + xy \mathcal L_2(\mathbf z) &+ y^2\mathcal L_3(\mathbf z) + x\mathcal Q_1(\mathbf z) + y\mathcal Q_2(\mathbf z) \\
=AX^3 + BX^2Y + CXY^2 + DY^3 +
X^2\mathcal L_1(\mathbf z) + &XY \mathcal L_2(\mathbf z) + Y^2\mathcal L_3(\mathbf z) + X\mathcal Q_1(\mathbf z)\\
& + Y\mathcal Q_2(\mathbf z)
\end{align*}
This has $s+2$ variables and we desire to show that there are $\ll P^{s+\varepsilon}$ solutions.
\par
We now define $g=X-x$, $h=Y-y$ so that the equation becomes
\begin{multline*}
Ag(3x^2+3gx+g^2) + B(x^2h+2gxy+2ghx+g^2y+g^2h) \\
+ C(2hxy+gy^2+h^2x+2ghy+gh^2) + Dh(3y^2+3hy+h^2) + \mathcal L_1(\mathbf z)(2xg+g^2) \\
 \mathcal L_2(\mathbf z)(xh+yg+gh) + \mathcal L_3(\mathbf z)(2yh+h^2) + g\mathcal Q_1(\mathbf z) + h\mathcal Q_2(\mathbf z) = 0.
\end{multline*}
We can rewrite this as
\begin{equation}
\label{eq:four2}
ax^2+bxy+ cy^2 + dx+ey +f=0
\end{equation}
where
\begin{equation}
\label{eq:four3}
a=3Ag+Bh,\quad b=2Bg+2Ch,\quad c=Cg+3Dh,
\end{equation}
\begin{equation}
\label{eq:four4}
d=3Ag^2+2Bgh+Ch^2+2\mathcal L_1(\mathbf z)g +\mathcal L_2(\mathbf z) h,
\end{equation}
\begin{equation}
\label{eq:four5}
e=Bg^2+2Cgh+3Dh^2+\mathcal L_2(\mathbf z)g + 2\mathcal L_3(\mathbf z) h
\end{equation}
and
\begin{multline}
\label{eq:four6}
f=Ag^3+Bg^2h+Cgh^2 +Dh^3 +\mathcal L_1(\mathbf z) g^2 +\mathcal L_2(\mathbf z)gh + \mathcal L_3(\mathbf z)h^2 \\
+ \mathcal Q_1(\mathbf z)g +\mathcal Q_2(\mathbf z)h.
\end{multline}
We emphasise that the coefficients $a$, through $e$ depend on $g$, $h$, $\mathbf z$, but not $x$, $y$.
\par
We will apply Lemma \ref{lem:two4} multiple times.  In the notation of Lemmma \ref{lem:two4}, if for any given $g$, $h$, $\mathbf z$ any of the cases (i), (ii), (iii) or (v) there hold, then we have a suitable bound for the number of corresponding solutions of (\ref{eq:four2}).
\par
Now suppose that case (iv) holds.  If $3AC-B^2\not=0$, then $\Theta=0$ ans $\Theta$ has a term $A(3AC-B^2)g^5$ and so $g$ is determined by $\mathbf z$ and $h$ alone.  Again we have a suitable bound.  If $3AC-B^2=0$, then $\Theta=0$ and $\Theta$ has a term $(9A^2D+2ABC-B^2)g^4h= A(9AD-BC)g^4h$ and so if $9AD-BC\not=0$, then $h=0$ or $g$ is determined by $h$ which again also gives a suitable bound.  Finally, if $3AC-B^2=BC-9AD=0$, then by considering the term in $h^5$ either we have the desired bound or $3AC-B^2=BC-9AD=C^2-3BD=0$.  Since $AD\not=0$ we have $BC\not=0$, and so $a=\frac{B}{C}(Bg+Ch)$, $b=2(Bg+Ch)$, $c=\frac{C}{B}(Bg+Ch)$.  But then $\Delta =0$ contrary to the assumption in this case.
\par
In the notation of Lemma \ref{lem:two4} the remaining situations are \par
{\bf 1.} $a^2+c^2\not=0$ and $\Delta=0$,
\par
{\bf 2.} $a=c=0=\Theta=0$ and $\Delta\not=0$.
\par
{\bf 3.} $a=c=\Delta=0$.
\par
{\bf In case 1.}, one of the cases (v), (vi) or (vii) of Lemma \ref{lem:two4} applies.  In case (v) there are no solutions, so we can forget that case.  In cases (vi) and (vii), $\Delta=0$ gives
\[
(B^2-3AC)g^2 +(BC-9AD)gh + (C^2-3BD)h^2=0
\]
If any of the coefficients $B^2-3AC$, $BC-9AD$, $C^2-3BD$ are non-zero, then $g$ or $h$ is $0$ or $g$ is determined by $h$ or {\it vice versa}. That leaves the situation, as in case (iv),
\[
B^2-3AC = BC-9AD = C^2-3BD=0.
\]
Since $AD\not=0$ we also have $BC\not=0$.  Now
\[
A=\frac{B^2}{3C},\quad D=\frac{C^2}{3B}
\]
and so
\begin{equation}
\label{eq:four7}
a=BC^{-1}(Bg+Ch),\quad c=CB^{-1}(Bg+Ch)
\end{equation}
and
\[
ax^2+bxy+cy^2 = B^{-1}C^{-1}(Bg+Ch)(Bx+Cy)^2.
\]
The above implies, {\it inter alia}, that $3C|B^2$ and $3B|C^2$.  Therefore, in what follows all of our expression are integer valued.
\par
Since $a\not=0$ or $c\not=0$ we have $Bg+Ch\not=0$.  We also have
$$d=C^{-1}(Bg+Ch)^2 + 2\mathcal L_1(\mathbf z)g +\mathcal L_2(\mathbf z) h,$$
$$e=B^{-1}(Bg+Ch)^2 + \mathcal L_2(\mathbf z)g +2\mathcal L_3(\mathbf z) h$$
and
\[
f = 3^{-1}B^{-1}C^{-1}(Bg+Ch)^3 + \mathcal L_1(\mathbf z) g^2 +\mathcal L_2(\mathbf z)gh + \mathcal L_3(\mathbf z)h^2 + \mathcal Q_1(\mathbf z)g +\mathcal Q_2(\mathbf z)h.
\]
\par
Then, by (\ref{eq:four3}) and (\ref{eq:four7}),
\begin{multline*}
2ae-bd = 2BC^{-1}(Bg+Ch)\left(
B^{-1}(Bg+Ch)^2 + \mathcal L_2(\mathbf z)g +2\mathcal L_3(\mathbf z) h
\right)\\ - 2(Bg+Ch)\left(
C^{-1}(Bg+Ch)^2 + 2\mathcal L_1(\mathbf z)g +\mathcal L_2(\mathbf z) h
\right).
\end{multline*}
so that
\begin{equation}
\label{eq:four8}
2ae-bd = 2(Bg+Ch) \left(
2BC^{-1}\mathcal L_3(\mathbf z)h + \left(
BgC^{-1}-h
\right) \mathcal L_2(\mathbf z) -2\mathcal L_1(\mathbf z) g
\right).
\end{equation}
Likewise
\begin{equation}
\label{eq:four9}
2cd-be = 2(Bg+Ch) \left(
2CHB^{-1}\mathcal L_1(\mathbf z)g + \left(
ChB^{-1}-g
\right) \mathcal L_2(\mathbf z) -2\mathcal L_3(\mathbf z) h
\right).
\end{equation}
In case (vi) consider first the possibility $a\not=0$, $2ae-bd\not=0$, $\Delta=0$.  let $l=2ae-bd$, so that $0<|l|\ll P^3$. Then $g$, $h$, $\mathbf z$ satisfy
\[
2(Bg+Ch) \big(
2B\mathcal L_3(\mathbf z)h + (Bg - Ch) \mathcal L_2(\mathbf z) -2C\mathcal L_1(\mathbf z) g
\big) = Cl.
\]
Given such $g$, $h$, $\mathbf z$, by Lemma \ref{lem:two4} (vi) the number of choices of $x$ and $y$ is
\[
\ll P^{\varepsilon} \left(
1+ k(P/|l|)^{\frac12}
\right)
\]
where $k^2|l$.  The total contribution from the $P^{\varepsilon}$ term is $\ll P^{s+\varepsilon}$, which is acceptable.  Thus it suffices to consider the contribution from the $P^{\varepsilon}k(P/|l|)^{\frac12}$ term.  Therefore we can presume that there is an $l$ with $|l|\in\mathbb N$ and a $u\in\mathbb Z\setminus\{0\}$ such that $2(Bg+Ch)=u$ and
\[
\big(B\mathcal L_2(\mathbf z)-2C\mathcal L_1(\mathbf z)\big)g +\big(2B\mathcal L_3(\mathbf z) -C\mathcal L_2(\mathbf z)\big) h = Cl/u.
\]
Given $g$, the first of these equations determines $h$.  In the second equation we have $Cl/u\not=0$ and the left side is a linear form in $\mathbf z$.  Thus at least one of the $z_j$ appears explicitly and so is determined by the other variables.  Hence, given $l$, the total number of possible $g$, $h$ and $\mathbf z$ is $\ll P^{s-2+\varepsilon}$.  Thus the total contribution is
\[
\ll P^{s-2+\varepsilon} \sum_{0<|l|\ll P^3}\sum_{k^2|l} k(P/|l|)^{\frac12} \ll P^{s-2+\varepsilon} \sum_{k\ll P^{3/2}} P^{\frac12} \sum_{0<j\ll P^3/k^2} j^{-\frac12} \ll P^{s+2\varepsilon}
\]
and we have an acceptable bound for the number of solutions in this case.
\par
Alternatively, if $a=0$ but $c\not=0$ a concomitant argument, where $2ae-bd$ is replaced by $2cd-be$ gives the desired bound.
\par
Now consider case (vii), and to begin with suppose that $a\not=0$, $2ae-bd=\Delta=0$ and $d^2-4af$ is a perfect square.  Then by (\ref{eq:four8}) we have
\[
(Bg+Ch) \left(
2BC^{-1}\mathcal L_3(\mathbf z)h + \left(
BgC^{-1}-h
\right) \mathcal L_2(\mathbf z) -2\mathcal L_1(\mathbf z) g
\right)=0.
\]
Since $a\not=0$, by (\ref{eq:four1}) we have $Bg+Ch\not=0$.  Thus
\[
2BC^{-1}\mathcal L_3(\mathbf z)h + \left(
BgC^{-1}-h
\right) \mathcal L_2(\mathbf z) -2\mathcal L_1(\mathbf z) g =0,
\]
so that
\[
\big(B\mathcal L_2(\mathbf z) -2C\mathcal L_1(\mathbf z)) g + (2B\mathcal L_3(\mathbf z) -C\mathcal L_2(\mathbf x)\big) h=0.
\]
If $B\mathcal L_2(\mathbf z) -2C\mathcal L_1(\mathbf z)$ or $2B\mathcal L_3(\mathbf z) -C\mathcal L_2(\mathbf x)$ is not identically $0$, then either $g$ is determined by $h$ and $\mathbf z$, or $h$ is by $g$ and $\mathbf z$, or there are $\ll P^{s-3}$ choices of $\mathbf z$ for which $B\mathcal L_2(\mathbf z) -2C\mathcal L_1(\mathbf z) =0$ or $2B\mathcal L_3(\mathbf z) -C\mathcal L_2(\mathbf x)=0$.  Thus we can then appeal to case (vii) of the lemma.  If $B\mathcal L_2(\mathbf z) -2C\mathcal L_1(\mathbf z)$ and $2B\mathcal L_3(\mathbf z) -C\mathcal L_2(\mathbf x)$ are identically $0$, then we have
\begin{align*}
0=&ax^2+bxy+cy^2 +dx+ey+f\\
=& (Bg+Ch)3^{-1}B^{-1}C^{-1}\left(
3(Bx+Cy)^2 + 3(Bx+Cy)(Bg+Ch) + (Bg+Ch)^2
\right) \\
&+ \mathcal L_1(\mathbf z) g^2 + \mathcal L_2(\mathbf z)gh + \mathcal L_3(\mathbf z)h^2 + \mathcal Q_1(\mathbf z)g + \mathcal Q_2(\mathbf z) h.
\end{align*}
Since $B\mathcal L_2(\mathbf z) -2C\mathcal L_1(\mathbf z)$ and $2B\mathcal L_3(\mathbf z) -C\mathcal L_2(\mathbf x)$ are identically $0$, we obtain
\[
G(3X^2 +3XG + G^2) + G^2B^{-2}\mathcal L_1(\mathbf z) + GB^{-1} \mathcal Q_1(\mathbf z) + hB^{-1} (B\mathcal Q_2(\mathbf z)- C\mathcal Q_1(\mathbf z)) = 0
\]
where $X=Bx+Cy$ and $G=Bg+Ch$.  If $B\mathcal Q_2(\mathbf z) - C\mathcal Q_1(\mathbf z) $ is not identically $0$, then that are at most $\ll P^{s-3}$ values of $\mathbf z$ for which it is $0$.  There are also $\ll P$ values of $g$ and $h$ for which $G=0$ or $h=0$.  Thus we can suppose that $B\mathcal Q_2(\mathbf z) - C\mathcal Q_1(\mathbf z) $ is not identically $0$ and $hG\not=0$.  But then $G|h(B\mathcal Q_2(\mathbf z) - C\mathcal Q_1(\mathbf z))$ and so given $h$ and $\mathbf z$ there are $\ll P^{\varepsilon}$ choices for $g$, so we have a suitable bound in case (vii).  The alternative case $c\not=0$, $2cd-be=\Delta=0$, $e^2-4cf$ a perfect square is similar.  That completes the analysis of 1.
\par
{\bf Case 2.} corresponds exactly to (viii) of Lemma \ref{lem:two4}.  Then by (\ref{eq:four3}) and the fact that $A\not=0$ we see that $g$ is determined by $h$, so that for any given $\mathbf z$, by (viii) of Lemma \ref{lem:two4}, the total number of choices of $x,y,g,h$ is $\ll P^2$ and that is sufficient.
\par
{\bf Case 3.}  This corresponds to (ix) of Lemma \ref{lem:two4}).  This  is similar to case (vii) but now $a=b=c=0$.  Then $g=-Bh/(3A)$, $h=-Cg/(3D)$.  Thus $B=0$ implies $g=0$ and so $h=0$ and this gives an adequate bound for the total number of such solutions.  Likewise $C=0$.  Hence we can suppose $BC\not=0$.  Moreover anyway $g$ is fixed by $h$ and {\it vice versa}.  It follows that
\[
g=\lambda h
\]
where
\[
\lambda = -B/(3A)=-3D/C=-C/B
\]
and
\begin{align*}
3Ag^2+2Bgh+Ch^2 &= 0,\\
Bg^2+2Cgh+3Dh^2 &=0, \\
Ag^3+Bg^2h+Cgh^2 +Dh^3 &=0,\\
\end{align*}
and so our equation reduces to
$$dx+ey +f$$
where
\[
d=2\mathcal L_1(\mathbf z)g +\mathcal L_2(\mathbf z) h,
\]
\[
e=\mathcal L_2(\mathbf z)g + 2\mathcal L_3(\mathbf z) h,
\]
\[
f=\mathcal L_1(\mathbf z) g^2 +\mathcal L_2(\mathbf z)gh + \mathcal L_3(\mathbf z)h^2\\
 + \mathcal Q_1(\mathbf z)g +\mathcal Q_2(\mathbf z)h.
\]
Recall that $g$ is fixed by $h$ and {\it vice versa}.  If $d\not=0$ or $e\not=0$, then $x$ or $y$ is fixed by the other variables.  Thus we can suppose that $d=e=0$ and hence $f=0$.  Thus
\begin{equation}
\label{eq:four10}
2\mathcal L_1(\mathbf z)\lambda +\mathcal L_2(\mathbf z)= \mathcal L_2(\mathbf z)\lambda + 2\mathcal L_3(\mathbf z) = 0
\end{equation}
and
\begin{equation}
\label{eq:four11}
\mathcal L_1(\mathbf z) \lambda^2 +\mathcal L_2(\mathbf z)\lambda + \mathcal L_3(\mathbf z)\\
+ \mathcal Q_1(\mathbf z)\lambda +\mathcal Q_2(\mathbf z)=0.
\end{equation}
Substituting from the first two equations into the third gives
\begin{equation}
\label{eq:four12}
\mathcal Q_1(\mathbf z)\lambda +\mathcal Q_2(\mathbf z)=0.
\end{equation}
If any one of the equations (\ref{eq:four10}) and (\ref{eq:four12}) does not hold identically, then it holds for at most $\ll P^{s-3}$ choices of $\mathbf z$ and we are done.  If they all hold identically, then $d=e=f=0$.  Returning to the original equation $\mathcal C(\mathbf x)=\mathcal C(\mathbf X)$ this gives $\mathcal C(x+\lambda h,y+h,\mathbf z) = \mathcal C(x,y,\mathbf z)$ identically for all $x$, $y$, $h$, $\mathbf z$.  Taking $h=-y$ this gives $\mathcal C(x,y,\mathbf z) = \mathcal C(x-\lambda y,0,\mathbf z)$ and then we can appeal to our inductive hypothesis since this reduces to the case $s-1$.

\Section{The proof of Corollary \ref{thm:one2}}
\label{sec:five}

\noindent We adapt the proof of Hua's Lemma (Lemma 2.5 of Vaughan \cite{RV97}).  Let
\[
f(\alpha)=\sum_{|x|\le Q} e(\alpha x^k)\text{ and }S(x)=\sum_{\substack{
x_2,\ldots x_s\\
|x_j|\le P
}} e\big(\alpha\mathcal C^*(x_2,\ldots,x_s)\big).
\]
Then, by the Cauchy-Schwarz inequality,
\begin{equation}
\label{eq:five1}
L(P) = \int_0^1 |f(c\alpha)S(\alpha)|^2 d\alpha\le \left(
\int_0^1 |S(\alpha)|^2 d\alpha
\right)^{1/2}\left(
 \int_0^1 |f(c\alpha)^2S(\alpha)|^2 d\alpha
\right)^{1/2}.
\end{equation}
By Weyl differencing and the Cauchy-Schwarz inequality
\[
|f(c\alpha)|^4\le \sum_{|h|\ll P^3} Q(h) e(\alpha h)
\]
where $Q(0)\ll Q^3$, $Q(h )\ll QP^{\varepsilon}\,(h\not=0)$.
\par
For $n\in\mathbb Z$, let $R(n)$ denote the number of solutions of $\mathcal C^*(x_2,\ldots,x_s)=n$ with $|x_j|\le P$.  Then, by Theorem \ref{thm:one1},
\[
\int_0^1 |S(\alpha)|^2 d\alpha =\sum_n R(n)^2\ll P^{2s-4+\varepsilon}.
\]
The theorem then follows from (\ref{eq:five1}) and the observation
\begin{align*}
\int_0^1 |f(c\alpha)^2S(\alpha)|^2 d\alpha &\le \sum_{n_1,n_2} R(n_1)R(n_2) Q(n_2-n_1) \\
&\ll Q^3 \sum_n R(n)^2 + QP^{\varepsilon} \left(\sum_nR(n)\right)^2\ll QP^{2s-2+2\varepsilon}.
\end{align*}

\Section{The proof of Theorem \ref{thm:one4}}
\label{sec:six}

\noindent For a given integer $n$, let $R(n)$ denote the number of choices of $\mathbf x$ with $|x_1|\le Q$, $|x_j|\le P$ $(2\le j\le s)$ and $\mathcal P(\mathbf x) = cx_1^k+\mathcal C^*(x_2,\cdots,x_s)=n$.  Then we have
\[
N_{\mathcal P}(\mathbf P) = \sum_{n\not=0} \mu(|n|)^2 R(n) = \sum_{d} \mu(d) \sum_{\substack{n\not=0\\
d^2|n}} R(n).
\]
Let $T\ge 1$ be at our disposal.  Then, by Cauchy's inequality and Corollary  \ref{thm:one2},
\begin{equation}
\label{eq:six1}
\sum_{d>T} \sum_{\substack{n\not=0\\
d^2|n}} R(n) \ll \left(
\sum_{T<d\ll P^{3/2}} \frac{P^3}{d^2}
\right)^{\frac12}\left(
\sum_{n} \sum_{d^2|n} R(n)^2
\right)^{\frac12} \ll T^{-\frac12} P^{s+\varepsilon}Q^{\frac14}.
\end{equation}
Hence
\begin{equation}
\label{eq:six2}
\sum_{d>T} \mu(d)\sum_{\substack{n\not=0 \\
d^2|n}} R(n) \ll QP^{s-1}PT^{-1/2}Q^{-3/4}.
\end{equation}
\par
Suppose first that $k=3$, so that $Q=P$.  For $d\le T$ we have
\[
\sum_{\substack{n \\
d^2|n}} R(n) = \left(
\frac{2P}{d^2}+O(1)
\right)^s \rho_{\mathcal P}(d^2) = \rho_{\mathcal P}(d^2)\frac{(2P)^s}{d^{2s}} + O\left(
\rho_{\mathcal P}(d^2)\frac{P^{s-1}}{d^{2s-2}} +\rho_{\mathcal P}(d^2)
\right).
\]
Hence, by Lemma \ref{lem:two5},
\[
\sum_{d\le T} \mu(d) \sum_{\substack{n\not=0 \\
d^2|n}} R(n) = (2P)^s\sum_{d=1}^{\infty} \mu(d)\frac{\rho_{\mathcal P}(d^2)}{d^{2s}} + E
\]
where $E\ll TR(0)+ P^s T^{\varepsilon-1} + T^{1+\varepsilon}P^{s-1} + T^{2s-1+\varepsilon}$.  Clearly $R(0)\ll P^{s-1}$, so by (\ref{eq:six2}),
\[
N_{\mathcal P}(\mathbf P) = (2P)^s \sum_{d=1}^{\infty} \mu(d)\frac{\rho_{\mathcal P}(d^2)}{d^{2s}} + E'
\]
where $E'\ll P^{s+1/4+\varepsilon}T^{-1/2} + P^s T^{\varepsilon-1} + T^{1+\varepsilon}P^{s-1} + T^{2s-1+\varepsilon}$.  Now $T = P^{\frac{4s+1}{8s-2}}$ gives
\[
E' \ll P^{s-\frac1{8s-2}+\varepsilon}.
\]
Also, by Lemma \ref{lem:two5}, and since $\rho_{\mathcal P}(d^2)$ is a multiplicative, we have
\[
\sum_{d=1}^{\infty} \mu(d)\frac{\rho_{\mathcal P}(d^2)}{d^{2s}} = \prod_{p} \left(
1-\frac{\rho_{\mathcal P}(p^2)}{p^{2s}}
\right).
\]
\par
Now suppose that $k=4$, so that $Q=P^{3/4}$.  This is more delicate.  Let
\[
X=\frac13\log P,\quad D=\prod_{p\le X} p
\]
and define
\[
N_0 = \sum_{n\not=0}\sum_{d^2|(D^2,n)}\mu(d) R(n),\quad N(U,V) = \sum_{U<p\le V} \sum_{\substack{n\\
p^2|n}} R(n)
\]
Then clearly every $n$ counted by $N_{\mathcal P}(\mathbf P)$ is counted by $N_0$ and those $n$ counted by $N_0$ but not by $N_{\mathcal P}(\mathbf P)$ will have a factor $p^2$ with $p>X$.  Thus
\[
N_0-N(P^{8/9},\infty)-N(|c|Q,P^{8/9})-N(P^{1/2},|c|Q)-N(X,P^{1/2})\le N_{\mathcal P}(\mathbf P)\le N_0.
\]
\par
The inequality (\ref{eq:six1}) gives
\[
N(P^{8/9},\infty) \ll QP^{s-4/9-9/16+\varepsilon}=QP^{s-\frac1{144}+\varepsilon}.
\]
In the inner sum in $N(|c|Q,P^{8/9})$, given $p$ with $|c|Q<p\le P^{8/9}$ we are counting the $\mathbf x$ with $|x_1|\le Q$, $|x_j|\le P$ $(2\le j\le s)$ and $p^2|cx_1^4+\mathcal C^*(x_2,\ldots,x_s)$.  When $j\ge 2$ write $x_j=u_j+pv_j$ where $1\le u_j\le p$, and so $|v_j|\ll P/p$.  When $x_1=0$ the number of choices for $u_2,\ldots,u_s$ is $\ll p^{s-2}$ and so the number of choices for $\mathbf x$ with $x_1=0$ is
\[
\ll P^{s-1}p^{-1}.
\]
When $x_1\not=0$ since $p>c|x_1|$ we have $p\nmid \mathcal C^*(u_2,\ldots,u_s)$.  Hence, by the Euler relation $u_2,\ldots,u_s$ is a non-singular point modulo $p$ of $\mathcal C^*$.  Moreover, given $x_1\not=0$ the number of choices for $u_2,\ldots, u_s$ is $\ll p^{s-2}$, and for each such, since $p^2>P$ the number of choices for $v_2,\ldots,v_s$ is $\ll (P/p)^{s-2}$.  Thus the total number of choices for $\mathbf x$ with $x_1\not=0$ is
\[
\ll Qp^{s-2}(P/p)^{s-2} = QP^{s-2}.
\]
Therefore
\[
N(|c|Q,P^{8/9}) \ll \sum_{|c|Q<p\le P^{8/9}} \big(P^{s-1}p^{-1}+QP^{s-2}\big)\ll QP^{s-10/9}.
\]
When $P^{1/2}<p\le |c|Q$ we proceed in the same way, but now the number of choices for $x_1\equiv 0\imod p$ is $\ll Q/p$.  Thus
\[
N(P^{1/2},|c|Q) \ll \sum_{P^{1/2}<p\le |c|Q} \big(QP^{s-1}p^{-2}+QP^{s-2}\big)\ll QP^{s-5/4}.
\]
For the sum $N(X,P^{1/2})$ for a given prime $p$ with $X<p\le P^{1/2}$ we divide the $\mathbf x$ into residue classes modulo $p^2$ and obtain the bound
\[
\sum_{X<p\le P^{1/2}} \left(
\frac{Q}{p^2} +1
\right)\frac{P^{s-1}}{p^{2s-2}}\rho_{\mathcal P}(p^2) \ll QP^{s-1} \sum_{X<p\le P^{1/2}} \frac{\rho_{\mathcal P}(p^2)}{p^{2s}} + P^{s-1}\sum_{X<p\le P^{1/2}} \frac{\rho_{\mathcal P}(p^2)}{p^{2s-2}}
\]
and by Lemma \ref{lem:two5} this gives
\[
N(X,P^{1/2})\ll \frac{QP^{s-1}}{X\log X} + QP^{s-5/4}.
\]
\par
This leaves $N_0$.  By the prime number theorem $D=\exp\big(\vartheta(X)\big)\le P^{1/2}$.  Hence
\[
N_0 = \sum_{d|D}\mu(d) \sum_{m\not=0} R(md^2)=\sum_{d|D}\mu(d) \sum_{m} R(md^2) + O(P^{s-1/2}).
\]
The new inner sum is the number of $\mathbf x$ with $\mathcal P(\mathbf x)\equiv 0\imod{d^2}$ and this is
\[
\left(
\frac{Q}{d^2}+O(1)
\right)\left(
\frac{P}{d^2}+O(1)
\right)^{s-1} \rho_{\mathcal P}(d^2) = QP^{s-1}\frac{\rho_{\mathcal P}(d^2)}{d^{2s}} + O\left(
P^{s-1} \frac{\rho_{\mathcal P}(d^2)}{d^{2s-2}} + \rho_{\mathcal P}(d^2)
\right).
\]
Hence, by Lemma \ref{lem:two5},
\[
N_0 = QP^{s-1}\prod_{p\le X} \left(
1-\frac{\rho_{\mathcal P}(p^2)}{p^{2s}}
\right) + O\left(
P^{s-\frac12+\varepsilon}
\right).
\]
To complete the proof we observe that by Lemma \ref{lem:two5}
\[
\prod_{p> X} \left(
1-\frac{\rho_{\mathcal P}(p^2)}{p^{2s}}
\right) = \exp\left(
-\sum_{p>X} \log\left(
1-\frac{\rho_{\mathcal P}(p^2)}{p^{2s}}
\right)
\right) = 1+O\left(
\frac1{X\log X}
\right).
\]

\end{document}